\newtheorem{theorem}{Theorem}
\newtheorem{remark}[theorem]{Remark}
\begin{document}

\title[Coherent Structures in Flame Fronts]{Coherent Structures in Flame Fronts}

\author{Sultan Aitzhan}
\address{Department of Mathematics, Drexel University, Philadelphia, PA 19104 USA}
\email{sa3697@drexel.edu}

\author{Benjamin F. Akers}
\address{Department of Mathematics and Statistics, Air Force Institute of Technology, WPAFB, OH
45433, USA}
\email{benjamin.akers@afit.edu}

\author{David M. Ambrose}
\address{Department of Mathematics, Drexel University, Philadelphia, PA 19104 USA}
\email{dma68@drexel.edu}

\begin{abstract}
We study traveling waves in a coordinate-free model of flame fronts.  The flame front is the interface between the burnt and
unburnt phases of a gas undergoing combustion.  The front therefore moves in a preferred direction, as the unburnt gas is
consumed.  In the horizontally periodic, vertically unbounded setting, we prove the existence of waves of permanent
form which are traveling in the vertical direction.  We also compute these waves.  The analysis and computation use 
the framework of traveling waves in the arclength parameterization as previously developed by two of the authors and Wright.
\end{abstract}

\keywords{coherent structures, traveling waves, flame fronts, coordinate-free, bifurcation theory, quasi-Newton method}

\subjclass{35C07, 35B32, 80A25, 80M22}

\maketitle

\section{Introduction}

We study a coordinate-free model of flame fronts introduced by Frankel and Sivashinsky \cite{frankelSivashinsky1987}.  This model
specifies the velocity of the one-dimensional interface between the burnt and unburnt phases of a two-dimensional gas undergoing combustion; this interface is the flame
front.  This coordinate-free model specifies only the normal velocity of the front, in terms of intrinsic geometric information (namely, 
the arclength and curvature of the front).  Naturally, as the gas undergoes combustion, the unburnt phase of the gas is consumed, 
and thus the burnt phase can be viewed as advancing into the burnt phase.  This gives the motion of the front a naturally preferred direction,
and this is reflected in the coordinate-free model.  

Frankel and Sivashinsky give both a linear and a nonlinear version of the coordinate-free model; the linear model is linear with respect to
curvature, and is
\begin{equation}\label{LinModel}
 -U=1+(\alpha-1)\kappa+4\kappa_{ss}.
 \end{equation}
 Here, $U$ is the normal velocity of the front, $\kappa$ is the curvature of the front, $s$ is arclength measured from a specific point,
 and $\alpha$ is a parameter that reflects how unstable the front is.
The nonlinear version of the model (again, meaning nonlinear with respect to curvature) is
\begin{equation}\label{NonLinModel}
 -U=1+(\alpha-1)\kappa+\alpha^2(\alpha+3)\kappa_{ss}+\left(1+\frac{\alpha}{2}\right)\kappa^2+\left( 2\alpha+5\alpha^2-\frac{\alpha^3}{3} \right)\kappa^3. 
 \end{equation}

Frankel and Sivashinksy also introduced a coordinate-free model for the motion of two-dimensional flame fronts (between the phases of
a three-dimensional gas) \cite{frankelSivashinsky1988}.  In both the one-dimensional and two-dimensional cases, in the papers
\cite{frankelSivashinsky1987} and \cite{frankelSivashinsky1988}  the Kuramoto-Sivashinsky equation was formally derived as a weakly nonlinear model approximating the coordinate-free models.  Other prior work on coordinate-free models includes the the effects of temperature \cite{temperature1}, and the study of quasi-steady models \cite{roytburd1}, \cite{quasiSteady2}.  For the model of
\cite{temperature1} with temperature effects, an additional weakly nonlinear model other than the Kuramoto-Sivashinsky equation was 
studied in \cite{braunerEtAl}.  Some of these works included numerical simulations by fully explicit finite-difference methods.

More recently, the second and third authors introduced a non-stiff numerical method for the treatment of coordinate-free models.  The fully explicit
numerical method of, for instance, \cite{temperature1} faces a fourth-order stiffness constraint.  Instead, by following the framework of
Hou, Lowengrub, and Shelley for the evolution of vortex sheets with surface tension \cite{HLS1}, \cite{HLS2}, an implicit-explicit method
with at most a first-order stiffness constraint was developed \cite{ANZIAM}.  The third author and Hadadifard and Wright also made rigorous the
asymptotic derivation of the one-dimensional Kuramoto-Sivashinsky equation from the coordinate-free model \cite{SIADS}; this work included
expressing the coordinate-free model in evolutionary form, and proving well-posedness of the initial value problem for the coordinate-free model
of \cite{frankelSivashinsky1987}.  The third author and Liu then demonstrated the well-posedness of the higher-dimensional coordinate-free model
of \cite{frankelSivashinsky1988} in the work \cite{PHYSD}.

The second and third authors and Wright have developed and applied a framework for traveling waves in 
interfacial fluid dynamics which uses a parameterized curve, first in  \cite{IFB2}, and subsequently in \cite{WW}, \cite{EJMB-F}, \cite{ZAMP}, \cite{EJAM2},
\cite{PRSA}.  This is used in place of the usual traveling wave ansatz, that quantities be
a function of $x-ct,$ with spatial variable $x,$ wave speed $c,$ and time, $t.$   The alternative traveling wave ansatz is simply that the velocity of the curve 
satisfies
\begin{equation}\label{firstTravelingWaveAnsatz}
(x,y)_{t}=(c,0).
\end{equation}
The curve also satisfies
\begin{equation}\label{genericEvolution}
(x,y)_{t}=U\mathbf{\hat{n}}+V\mathbf{\hat{t}},
\end{equation}
where $U$ is the normal velocity (derived from physical principles such as the Euler equations), $V$ is the tangential velocity,
and $\mathbf{\hat{t}}$ and $\mathbf{\hat{n}}$ are the usual frame of unit tangent and normal vectors.
Combining \eqref{firstTravelingWaveAnsatz} and \eqref{genericEvolution}, one then can solve for the velocity of a traveling wave;
one finds $U=-c\sin(\theta),$ where $\theta$ is the tangent angle the curve forms with the horizontal.
If needed, additional equations (if the system has additional unknowns) can be found by differentiating with respect to time.
In the present case of advancing flame fronts, we will replace \eqref{firstTravelingWaveAnsatz} with a vertically-traveling ansatz for the front,
but will otherwise use the framework of \cite{IFB2}.

The Kuramoto-Sivashinsky equation in general dimension is
\begin{equation}\label{kuramotoSivashinsky}
\phi_{t}+\frac{1}{2}|\nabla\phi|^{2}=-\Delta^{2}\phi-\Delta\phi,
\end{equation}
and the position of the flame front would be $(x,\phi(x,t)).$
The Kuramoto-Sivashinsky equation  has long been known to exhibit rich dynamics and a number
of coherent structures, such as traveling waves \cite{troy2}, \cite{troy1}.  For the one-dimensional Kuramoto-Sivashinsky equation, 
the natural bifurcation parameter is the length of the periodic domain $[0,L];$ when $L$ passes through even multiples of $2\pi,$ a branch of
nontrivial traveling waves bifurcates from the zero solution.  The initial value problem for the Kuramoto-Sivashinsky equation is always well-posed,
and thus it is possible for these traveling waves to sometimes be asymptotically stable in the evolution.  This is what occurs for certain of the coherrent
structures.  We mention the detailed discussion of long-time behaviors of the two-dimensional Kuramoto-Sivashinsky equation \cite{papageorgiouEtAl},
which indeed shows the asymptotic stability of coherent structures for certain parameter values.

For the linear model \eqref{LinModel}, the highest-order derivative term has the correct sign to make the corresponding initial value problem well-posed.
We therefore can, and will below, consider stability of the traveling waves in the evolutionary problem.
By contrast, for the coherent structures we prove to exist for the nonlinear model \eqref{NonLinModel}, 
there will be no possibility of them being asymptotically stable in the nonlinear evolution.
Well-posedness of the initial value problem for the one-dimensional coordinate-free model with normal velocity \eqref{NonLinModel}
has been addressed in \cite{SIADS}.  
However, this well-posedness is not unconditional.
As can be seen in \eqref{kuramotoSivashinsky}, the fourth-order derivative has the correct sign for the Kuramoto-Sivashinksy equation to be a forward
parabolic equation.  However, as can be seen in \eqref{NonLinModel}, the sign of the fourth-derivative term in the coordinate-free model depends on the
value of the parameter $\alpha.$  In particular, for this coordinate-free model, the initial value problem will be well-posed for $\alpha>-3.$  The 
traveling waves we prove to exist, however, will be for $\alpha<-3.$  We see that the traveling waves for this coordinate-free model exist outside
the parameter range in which the Kuramoto-Sivashinsky equation was derived as a weakly nonlinear model, which was $\alpha\approx 1.$

The computations of branches of waves presented here use a pseudo-spectral collocation method, paired with a quasi-Newton iteration and numerical continuation.  Such procedures have been used in many previous works in a multiple physical scenarios \cite{fornberg1994review}. Spectral continuation based quasi-Newton iteration has been employed in water waves \cite{WW}, \cite{akersSeiders}, in elasticity problems \cite{liu2017quasi}, in trajectory control \cite{wang2018full}, and in laser-fluid interactions \cite{lane2024steady}, \cite{lane2023numerical},  to name a few.  Crucial to this procedure is understanding the leading asymptotics of the problem, as this can be used to create an initial guess for the algorithm.  In the water wave context these asymptotics are historically found via a Stokes' expansion \cite{craig2002traveling}, \cite{haupt1988modeling}, \cite{schwartz1974computer}, or a Lyapunov-Schmidt reduction \cite{akers2021wilton}, \cite{Akers_2025}.  This approach is also used here. 

The organization of the paper is as follows.  In Section \ref{formulationSection}, we adapt the traveling wave ansatz of \cite{IFB2} to the present setting.
In Section \ref{existenceSection} we apply Crandall-Rabinowitz bifurcation theory to prove existence of vertically traveling waves for the coordinate-free models
given by both \eqref{LinModel} and \eqref{NonLinModel}.  We compute branches of waves and observe their stability properties in Section \ref{computationSection}.
%%%%%%%%%%%%%%%%%%%%%%%%%%%%%%%%%%
\section{Traveling wave formulation and geometric considerations}\label{formulationSection}
%%%%%%%%%%%%%%%%%%%%%%%%%%%%%%%%%
As discussed in the introduction, we will be following the traveling wave formulation of \cite{IFB2},
adapted to the context of a traveling front in a gas undergoing combustion.
Our front has position $(x(\sigma,t),y(\sigma,t))$ at time $t,$ with spatial parameter $\sigma.$
The physical context is that the appropriate speed of the front is $(0,-\beta),$ for some $\beta\in\mathbb{R}.$  
Adapting \eqref{firstTravelingWaveAnsatz} and restating \eqref{genericEvolution}, this becomes
\begin{equation}\nonumber
 (x_t,y_t)=(0,-\beta)=U\mathbf{\hat{n}} +V\mathbf{\hat{t}}.
 \end{equation}
Solving this for $U$ and $V,$ we find
\begin{equation}\label{UTravelingEquation}
U=-\beta\cos(\theta),
\end{equation}
\begin{equation}\label{VTravelingEquation}
V=-\beta\sin(\theta).
\end{equation}

\begin{remark} A more general traveling wave ansatz would be $(x,y)_{t}=(c,-\beta),$ so that the traveling wave could translate horizontally as well
as vertically.  However, both analytically and computationally, we have not found the existence of any such waves with nontrivial $c.$  Therefore we focus
on the above case, in which $c=0.$
\end{remark}

While we have mentioned that the normal velocity, $U,$ comes from one of the models \eqref{LinModel} or \eqref{NonLinModel}, we have not yet discussed
the tangential velocity.  As in the numerical works of Hou, Lowengrub, and Shelley for the motion of vortex sheets with surface tension \cite{HLS1}, \cite{HLS2},
and in subsequent analytical works such as \cite{SIMA}, \cite{JMFM}, 
we use a tangential velocity which maintains a favorable parameterization of the curve $(x(\sigma,t),y(\sigma,t)).$  
We study the horizontally periodic case in which 
\begin{equation}\label{periodicity}
x(\sigma+2\pi,t)=x(\sigma,t)+2\pi,\qquad y(\sigma+2\pi,t)=y(\sigma,t),
\end{equation}
for all $\sigma$ and $t.$
With the arclength element $s_{\sigma}$ defined through the equation
\begin{equation}\nonumber
s_{\sigma}^{2}=x_{\sigma}^{2}+y_{\sigma}^{2},
\end{equation}
the length of one period of the curve is
\begin{equation}\nonumber
L(t)=\int_{0}^{2\pi}s_{\sigma}(\sigma,t)\ \mathrm{d}\sigma.
\end{equation}
In this setting, our preferred parameterization is a normalized arclength parameterization, so that
\begin{equation}\nonumber
s_{\sigma}(\sigma,t)=\frac{L(t)}{2\pi},
\end{equation}
for all $\sigma$ and $t.$  Thus in such a parameterization the arclength element is independent of $\sigma.$  In an evolutionary problem,
$s_{\sigma}$ would still depend on time, but in the present case it is also independent of time.  In the evolutionary problem, the evolution equation
for $s_{\sigma}$ is
\begin{equation}\nonumber
s_{\sigma t}=V_{\sigma}-\theta_{\sigma}U,
\end{equation}
which, when rearranged, can be taken to be the definition of the tangential velocity which maintains the normalized arclength parameterization:
\begin{equation}\nonumber
V_{\sigma}=\theta_{\sigma}U+\frac{L_{t}}{2\pi}.
\end{equation}
In the steady or traveling case, we have $L_{t}=0,$ so that $V_{\sigma}=\theta_{\sigma}U.$

In addition to $V_{\sigma}=\theta_{\sigma}U,$ however, we have another equation for $V,$ which is \eqref{VTravelingEquation}.
Taking the derivative of this with respect to $\sigma,$ we get
\begin{equation}\nonumber
V_{\sigma}=-\beta\theta_{\sigma}\cos(\theta(\sigma)).
\end{equation}
Substituting for $U$ from \eqref{UTravelingEquation}, we see that this is the same as saying $V_{\sigma}=\theta_{\sigma}U.$
Thus we view equation \eqref{UTravelingEquation} for the normal velocity $U$ as our traveling wave ansatz, and we view \eqref{VTravelingEquation} 
for the tangential velocity $V$ as setting the arclength parameterization of the curve.

Before closing the section, we make a further remark on the arclength and length of the curve.  We will be using $\theta$ as the unknown to be solved for
in both our analysis and our computations.  The length, $L,$ will appear in the traveling wave equations to be solved.  Therefore we must discuss how to 
find $L$ in terms of $\theta.$  This comes from the periodicity \eqref{periodicity}.  The relationship between the curve $(x,y)$ and the tangent angle $\theta,$
taking into account the normalized arclength parameterization, is
\begin{equation}\nonumber
(x_{\sigma},y_{\sigma})=\frac{L}{2\pi}(\cos(\theta(\sigma)),\sin(\theta(\sigma)).
\end{equation}
Integrating $x_{\sigma}$ over $[0,2\pi],$ we have
\begin{equation}\nonumber
2\pi=x(2\pi)-x(0)=\int_{0}^{2\pi}x_{\sigma}(\sigma)\ \mathrm{d}\sigma=\frac{L}{2\pi}\int_{0}^{2\pi}\cos(\sigma)\ \mathrm{d}\sigma.
\end{equation}
We therefore have our formula for $L$ in terms of $\theta,$
\begin{equation}\label{lengthFormula}
L=\frac{4\pi^{2}}{\displaystyle\int_{0}^{2\pi}\cos(\theta(\sigma))\ \mathrm{d}\sigma}.
\end{equation}

\section{Existence theorems for expanding traveling waves}\label{existenceSection}
To prove existence of nontrivial traveling solutions to the coordinate-free flame front model, 
our main tool is the Crandall-Rabinowitz ``bifurcation from a simple eigenvalue'' theorem, which we now state.
\begin{theorem}[\cite{Kielhofer2004}, Theorem I.5.1]\label{Crandall-Rabinowitz}
Suppose $X$ and $Z$ are Banach spaces, and $F: X \times \mathbb{R} \to Z$ is a mapping satisfying the properties below:
\begin{enumerate}[label=(\roman*)]
	\item\label{CRcond1} $F(0, \lambda) = 0$ for all $\lambda \in \mathbb{R},$
	\item\label{CRcond2} $F_u(0, \lambda_0)$ is a Fredholm operator of index zero for some $\lambda_0,$
	\item\label{CRcond3} $F \in C^2(U \times V, Z),$ where $0 \in U$ and $\lambda_0 \in V,$ for $U$ and $V$ open neighborhoods of $X$ and $\mathbb{R}$ respectively,
	\item\label{CRcond4} There exists $v_{0}\in X$ such that $\ker F_u(0, \lambda_0) = \mathrm{span}[v_0]$, satisfying $F_{u\lambda}(0, \lambda_0)v_0 \notin \mathrm{range}(F_u(0, \lambda_0)).$
\end{enumerate}
Then, there is a nontrivial continuously differentiable curve through $(0,\lambda_0),$
\[
	\{ (x(\varpi), \lambda(\varpi)): \varpi \in (-\delta, \delta), (x(0), \lambda(0)) = (0, \lambda_0) \},
\]
such that $F(x(\varpi), \lambda(\varpi)) = 0$ for $\varpi \in (-\delta, \delta).$
\end{theorem}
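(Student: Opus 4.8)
The plan is to prove this by the classical Lyapunov--Schmidt reduction followed by a scalar application of the implicit function theorem. First I would use hypothesis \ref{CRcond2}: since $F_u(0,\lambda_0)$ is Fredholm of index zero with one-dimensional kernel $\mathrm{span}[v_0]$ (by \ref{CRcond4}), its range is closed and has codimension one. So I can fix closed complements, $X = \mathrm{span}[v_0] \oplus X_0$ and $Z = \mathrm{range}(F_u(0,\lambda_0)) \oplus Z_0$ with $\dim Z_0 = 1$, and let $Q$ denote the projection of $Z$ onto $Z_0$ along the range. Writing $u = \varpi v_0 + w$ with $\varpi \in \mathbb{R}$ and $w \in X_0$, the equation $F(u,\lambda) = 0$ splits into the auxiliary equation $(I-Q)F(\varpi v_0 + w,\lambda) = 0$ and the bifurcation equation $QF(\varpi v_0 + w,\lambda) = 0$.

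For the auxiliary equation I would apply the implicit function theorem in the variable $w$ near $(w,\varpi,\lambda) = (0,0,\lambda_0)$. The key point is that the partial derivative in $w$ at this point is $(I-Q)F_u(0,\lambda_0)|_{X_0}$, an isomorphism of $X_0$ onto $\mathrm{range}(F_u(0,\lambda_0))$ --- injective because $X_0$ meets the kernel trivially, surjective because $F_u(0,\lambda_0)$ already maps onto its range. Using \ref{CRcond3} the relevant map is $C^2$, so the implicit function theorem yields a $C^2$ solution $w = w(\varpi,\lambda)$ with $w(0,\lambda_0) = 0$; moreover \ref{CRcond1} together with uniqueness in the theorem forces $w(0,\lambda) \equiv 0$, and differentiating the auxiliary equation in $\varpi$ at $(0,\lambda_0)$ shows $\partial_\varpi w(0,\lambda_0) = 0$ as well.

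Substituting into the bifurcation equation gives a scalar-valued function $\Phi(\varpi,\lambda) := QF(\varpi v_0 + w(\varpi,\lambda),\lambda) \in Z_0 \cong \mathbb{R}$ whose zeros near $(0,\lambda_0)$ are exactly the solutions of $F = 0$. Since $\Phi(0,\lambda) \equiv 0$, Hadamard's lemma lets me factor $\Phi(\varpi,\lambda) = \varpi\,\Psi(\varpi,\lambda)$ with $\Psi$ of class $C^1$; the trivial branch has thus been divided out and what remains is to solve $\Psi(\varpi,\lambda) = 0$. One checks $\Psi(0,\lambda_0) = \partial_\varpi\Phi(0,\lambda_0) = 0$, so the implicit function theorem applies once more provided $\partial_\lambda\Psi(0,\lambda_0) \ne 0$, and this final curve $\lambda = \lambda(\varpi)$ with $\lambda(0) = \lambda_0$, together with $x(\varpi) := \varpi v_0 + w(\varpi,\lambda(\varpi))$, is the sought bifurcation curve; nontriviality follows since $x(\varpi) = \varpi v_0 + O(\varpi^2)$, so $x(\varpi) \ne 0$ for small $\varpi \ne 0$.

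The main obstacle --- and the only place hypothesis \ref{CRcond4} is consumed --- is the nondegeneracy $\partial_\lambda\Psi(0,\lambda_0) \ne 0$. Since $\partial_\lambda\Psi(0,\lambda_0) = \partial_\lambda\partial_\varpi\Phi(0,\lambda_0)$, I would expand the derivatives of $\Phi$ by the chain rule at $(0,\lambda_0)$: every term of the form $Q F_u(0,\lambda_0)(\cdots)$ lies in the range and is annihilated by $Q$, while every term carrying a factor $\partial_\lambda w(0,\lambda_0) = 0$ or $\partial_\varpi w(0,\lambda_0) = 0$ drops out, leaving $\partial_\lambda\Psi(0,\lambda_0) = Q\,F_{u\lambda}(0,\lambda_0)v_0$. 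This is nonzero precisely because $F_{u\lambda}(0,\lambda_0)v_0 \notin \mathrm{range}(F_u(0,\lambda_0))$, i.e.\ its $Z_0$-component is nontrivial. Carefully tracking the regularity ($F \in C^2 \Rightarrow w \in C^2 \Rightarrow \Psi \in C^1 \Rightarrow \lambda,x \in C^1$) is the remaining bookkeeping.
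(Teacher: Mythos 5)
This statement is the Crandall--Rabinowitz theorem, which the paper does not prove but cites from Kielh\"{o}fer; your Lyapunov--Schmidt argument is precisely the standard proof given there, and it is correct: the splitting of $X$ and $Z$, the solution of the auxiliary equation by the implicit function theorem, the vanishing of $\partial_\varpi w$ and $\partial_\lambda w$ at $(0,\lambda_0)$, the Hadamard factorization $\Phi=\varpi\Psi$, and the identification $\partial_\lambda\Psi(0,\lambda_0)=QF_{u\lambda}(0,\lambda_0)v_0\neq 0$ via the transversality hypothesis all check out, as does the regularity bookkeeping yielding a $C^1$ curve. No gaps.
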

In the statement of this theorem, $F_u$ indicates the Fréchet derivative of $F$ with respect to $u.$

Before verifying the hypotheses of Theorem \ref{Crandall-Rabinowitz} for our problem, we must define the mapping $F$ and identify the relevant
function spaces.  Our mapping $F$ will be a version of \eqref{UTravelingEquation}, and so we now rewrite \eqref{UTravelingEquation} slightly, making use of 
\eqref{LinModel} and \eqref{NonLinModel}.  Specifically, we begin by working with \eqref{NonLinModel}.

Since \eqref{UTravelingEquation} is in terms of the tangent angle the front forms with the horizontal, $\theta,$ and since there is a natural relationship 
between $\theta$ and the curvature, $\kappa,$ we change variables in \eqref{NonLinModel} to $\theta.$  This relationship between $\theta$ and $\kappa$
is
\begin{equation}\nonumber
\kappa = \dfrac{\theta_\sigma}{s_\sigma}.
\end{equation}
Derivatives with respect to arclength satisfy
\begin{equation}\nonumber
\dfrac{d}{ds} = \dfrac{1}{s_\sigma} \dfrac{d}{d\sigma}.
\end{equation}
From these considerations, we find that the \eqref{NonLinModel} becomes 
\begin{equation}\label{NonLinModelRecast}
- U = 1 + (\alpha -1) \frac{\theta_\sigma}{s_\sigma} + \left( 1 + \frac{\alpha^2}{2} \right)  \frac{\theta_\sigma^2}{s_\sigma^2} + \left( 2\alpha + 5\alpha^2 - \frac{\alpha^3}{3} \right)  \frac{\theta_\sigma^3}{s_\sigma^3} + \alpha^2(\alpha +3) \frac{\theta_{\sigma\sigma\sigma}}{s_\sigma^3}.
\end{equation}
Combining \eqref{UTravelingEquation} and \eqref{NonLinModelRecast}, and letting $F$ be given by
\begin{multline}\nonumber
F(\theta, \beta;\alpha) = 1 + (\alpha -1) \frac{\theta_\sigma}{s_\sigma} + \left( 1 + \frac{\alpha^2}{2} \right)  \frac{\theta_\sigma^2}{s_\sigma^2} 
\\
+ \left( 2\alpha + 5\alpha^2 - \frac{\alpha^3}{3} \right)  \frac{\theta_\sigma^3}{s_\sigma^3} + \alpha^2(\alpha +3) \frac{\theta_{\sigma\sigma\sigma}}{s_\sigma^3} - \beta \cos(\theta),
\end{multline}
we see that solving for vertically traveling waves in the coordinate-free model is the same as solving $F(\theta, \beta; \alpha)=0$.
We take the space $X$ in the statement of Theorem \ref{Crandall-Rabinowitz} 
to be $H^{3}_{\text{odd}}\times\mathbb{R},$ 
and we take the space $Z$ there to be $L^{2}_{\text{even}}.$  The spaces $H^{3}_{\text{odd}}$ and $L^{2}_{\text{even}}$ 
are the usual periodic Sobolev and Lebesgue spaces $H^{3}$ and $L^{2},$ respectively,
restricted to functions with Fourier sine series expansions and Fourier cosine series expansions, respectively.
Applying Theorem \ref{Crandall-Rabinowitz} to this $F$ and with these function spaces, we have the following result.  

\begin{theorem}\label{firstExistenceTheorem}
For each integer $k_0 \geq 1,$ the polynomial
\begin{equation}\nonumber
q(\alpha)=(\alpha-1)-\alpha^{2}(\alpha+3)k_{0}^{2}
\end{equation}
has one and only one real root (with multiplicity one), $\alpha_{0}.$
The coordinate-free model with normal velocity \eqref{NonLinModel} 
has infinitely many traveling solutions satisfying \eqref{UTravelingEquation}, \eqref{VTravelingEquation} 
in the neighbourhood of $(\theta = 0, \beta = 1, \alpha = \alpha_0)$, 
i.e. there is a nontrivial continuously differentiable curve 
\[
	\{ (\theta(\varpi), \beta(\varpi), \alpha(\varpi)): \varpi \in (-\delta, \delta), ~\text{and}~ (\theta(0), \beta(0), \alpha(0)) = (0,1,\alpha_0) \},
\]
such that
$F(\theta(\varpi), \beta(\varpi), \alpha(\varpi)) = 0$ for $\varpi \in (-\delta, \delta)$.
\end{theorem}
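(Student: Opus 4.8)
The plan is to verify the four hypotheses of Theorem~\ref{Crandall-Rabinowitz}, taking $X=H^{3}_{\text{odd}}\times\mathbb{R}$, $Z=L^{2}_{\text{even}}$, the ``state'' variable to be the pair $u=(\theta,\beta)$, and the bifurcation parameter to be $\lambda=\alpha$. Since $F(0,\beta;\alpha)=1-\beta$, the trivial branch is $\{\theta=0,\ \beta=1\}$, so I first translate, setting $\tilde\beta=\beta-1$ and $\widetilde F(\theta,\tilde\beta;\alpha)=F(\theta,\tilde\beta+1;\alpha)$, so that $\widetilde F(0,0;\alpha)=1-\cos 0=0$ for all $\alpha$; this is hypothesis~\ref{CRcond1}. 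Hypothesis~\ref{CRcond3} is a routine smoothness check: for $\theta\in H^{3}$ one has $\theta_{\sigma}\in H^{2}$, a Banach algebra on the $2\pi$-periodic torus, so the terms quadratic and cubic in $\theta_{\sigma}$ are smooth into $L^{2}$; the scalar $1/s_{\sigma}=2\pi/L$ depends on $\theta$ only through $L=4\pi^{2}/\int_{0}^{2\pi}\cos(\theta)\,\mathrm{d}\sigma$ by \eqref{lengthFormula}, which is real-analytic in $\theta$ wherever that integral is nonzero, and near $\theta=0$ it equals $2\pi$; and $\theta\mapsto\cos\theta$, $\theta\mapsto\theta_{\sigma\sigma\sigma}$ are smooth into $L^{2}$. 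Hence $\widetilde F$ is in fact $C^{\infty}$ near $(0,0;\alpha_{0})$.

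Next I compute the linearization at the trivial solution. At $\theta=0$ one has $L=2\pi$, $s_{\sigma}=1$, and the first variation of $s_{\sigma}$ vanishes (it is proportional to $\int_{0}^{2\pi}\sin(0)\,\psi\,\mathrm{d}\sigma=0$), so every term quadratic or cubic in $\theta_{\sigma}$, together with the $\delta s_{\sigma}$ contribution to the $\theta_{\sigma\sigma\sigma}$ term, drops out, leaving
\[
\widetilde F_{u}(0,0;\alpha)(\psi,b)=(\alpha-1)\,\psi_{\sigma}+\alpha^{2}(\alpha+3)\,\psi_{\sigma\sigma\sigma}-b .
\]
Expanding $\psi=\sum_{k\ge 1}a_{k}\sin(k\sigma)\in H^{3}_{\text{odd}}$ and using $\partial_{\sigma}^{3}\sin(k\sigma)=-k^{3}\cos(k\sigma)$, this equals $-b+\sum_{k\ge1}k\,q_{k}(\alpha)\,a_{k}\cos(k\sigma)$ with $q_{k}(\alpha)=(\alpha-1)-\alpha^{2}(\alpha+3)k^{2}$, so the polynomial $q$ of the statement is exactly $q_{k_{0}}$. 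Since $k\,q_{k}(\alpha)$ grows like $k^{3}$, when $\alpha^{2}(\alpha+3)\ne0$ the map $\psi\mapsto(\alpha-1)\psi_{\sigma}+\alpha^{2}(\alpha+3)\psi_{\sigma\sigma\sigma}$ is an isomorphism of $H^{3}_{\text{odd}}$ onto the mean-zero part of $L^{2}_{\text{even}}$ up to a lower-order (hence compact) perturbation, while $-b$ supplies the constant mode; thus $\widetilde F_{u}(0,0;\alpha)$ is Fredholm of index zero, which is hypothesis~\ref{CRcond2}.

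It then remains to locate $\alpha_{0}$ and to verify hypothesis~\ref{CRcond4}. Writing $q(\alpha)=-k_{0}^{2}\alpha^{3}-3k_{0}^{2}\alpha^{2}+\alpha-1$, one has $q\to+\infty$ as $\alpha\to-\infty$ and $q\to-\infty$ as $\alpha\to+\infty$; the critical points of $q$ are the two roots $\alpha_{c}^{\pm}=-1\pm\sqrt{1+\tfrac{1}{3k_{0}^{2}}}$ of $q'$, with $\alpha_{c}^{-}\in(-3,-2)$ a local minimum and $\alpha_{c}^{+}\in(0,1)$ a local maximum. Because $0<\alpha_{c}^{+}<1$, the value $q(\alpha_{c}^{+})=(\alpha_{c}^{+}-1)-(\alpha_{c}^{+})^{2}(\alpha_{c}^{+}+3)k_{0}^{2}$ is strictly negative, so $q<0$ on all of $[\alpha_{c}^{-},\infty)$, while on $(-\infty,\alpha_{c}^{-}]$ the function $q$ decreases from $+\infty$ to $q(\alpha_{c}^{-})<0$; hence $q$ has exactly one real root $\alpha_{0}$, it lies in $(-\infty,\alpha_{c}^{-})$, and since $\alpha_{0}\notin\{\alpha_{c}^{-},\alpha_{c}^{+}\}$ it is simple. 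Comparing with $q(-3)=-4<0$ and using monotonicity gives $\alpha_{0}<-3$, so in particular $\alpha_{0}^{2}(\alpha_{0}+3)\ne0$. Moreover $q_{k}(\alpha_{0})=0$ forces $k^{2}=(\alpha_{0}-1)/(\alpha_{0}^{2}(\alpha_{0}+3))$, so $k=k_{0}$ is the only possibility; hence $\ker\widetilde F_{u}(0,0;\alpha_{0})=\mathrm{span}[v_{0}]$ with $v_{0}=(\sin(k_{0}\sigma),0)$, and $\mathrm{range}(\widetilde F_{u}(0,0;\alpha_{0}))=\{f\in L^{2}_{\text{even}}:\widehat f(k_{0})=0\}$, where $\widehat f(k_{0})$ denotes the $\cos(k_{0}\sigma)$-coefficient. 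Finally, differentiating the displayed formula for $\widetilde F_{u}(0,0;\alpha)$ in $\alpha$, with $\tfrac{d}{d\alpha}[\alpha^{2}(\alpha+3)]=3\alpha^{2}+6\alpha$,
\[
\widetilde F_{u\alpha}(0,0;\alpha_{0})v_{0}=\bigl(1-(3\alpha_{0}^{2}+6\alpha_{0})k_{0}^{2}\bigr)\,k_{0}\cos(k_{0}\sigma)=k_{0}\,q'(\alpha_{0})\,\cos(k_{0}\sigma),
\]
and $q'(\alpha_{0})\ne0$ because $\alpha_{0}$ is a simple root, so this element has nonzero $\cos(k_{0}\sigma)$-coefficient and lies outside $\mathrm{range}(\widetilde F_{u}(0,0;\alpha_{0}))$; that is hypothesis~\ref{CRcond4}. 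Theorem~\ref{Crandall-Rabinowitz} then produces the bifurcating branch, and running the argument for every integer $k_{0}\ge1$, each time at a different $\alpha_{0}$ (distinct $k_{0}$ give distinct $\alpha_{0}$, since $\alpha_{0}$ determines $k_{0}^{2}$), yields infinitely many branches.

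I expect the main obstacle to be the cubic analysis of $q$ in the last step: showing it has exactly one real root, that the root is simple, and that it lies below $-3$ is the only genuinely non-mechanical part, and it is also the crux of the proof, because the derivative $q'(\alpha_{0})$ controlling the multiplicity of the root is precisely the quantity appearing in the transversality requirement of \ref{CRcond4}, so once the root is known to be simple both conditions there follow at once. The remaining checks—the translation for \ref{CRcond1}, the $C^{2}$ (indeed $C^{\infty}$) regularity for \ref{CRcond3}, and the Fredholm property for \ref{CRcond2}—are standard, the only mild care being to confirm that the scalar $L$ depends smoothly on $\theta$ and stays away from its singular value near $\theta=0$.
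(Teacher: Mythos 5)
Your proof is correct and follows the same overall architecture as the paper's (verifying the four Crandall--Rabinowitz hypotheses with $X=H^{3}_{\text{odd}}\times\mathbb{R}$, $Z=L^{2}_{\text{even}}$, and the same linearization), but the two substantive sub-arguments are handled by genuinely different means. For the uniqueness and simplicity of the real root of $q$, the paper computes the discriminant of the cubic and checks its sign, whereas you run an elementary critical-point analysis of $q$, locating the local extrema at $\alpha_c^{\pm}=-1\pm\sqrt{1+1/(3k_0^2)}$ and showing the local maximum value is negative; your route has the added benefit of locating $\alpha_0<-3$, which the paper asserts in the introduction but does not re-derive inside the proof. For transversality, the paper computes the resultant of $q$ with the quadratic $p(\alpha)=3k_0^2\alpha^2+6k_0^2\alpha-1$ and checks it is nonzero for $k_0\ge 1$; you instead observe that the transversality coefficient is exactly $k_0\,q'(\alpha_0)$ (note $p=-q'$), so nonvanishing is equivalent to simplicity of the root, which you have already established. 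This is the cleaner observation --- it exposes the standard link between simple roots of the dispersion relation and the Crandall--Rabinowitz transversality condition, and it makes clear that the paper's discriminant computation and its resultant computation are really the same calculation done twice (the discriminant of $q$ is, up to a factor, $\mathrm{res}(q,q')$). As a minor bookkeeping point, your sign for the transversality coefficient ($+k_0 q'(\alpha_0)$) is the correct one; the paper's displayed coefficient $k_0(-1+3k_0^2\alpha_0(\alpha_0+2))$ carries the opposite sign, which is immaterial since only nonvanishing is needed.
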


\begin{proof}	
In order to apply Theorem \ref{Crandall-Rabinowitz}, we need to check the following items.
\begin{enumerate}[label=(\roman*)]
	\item\label{CFcond1} For some $\theta_0, \beta_0$, $F(\theta_0, \beta_0;\alpha) = 0$ for all $\alpha \in \mathbb{R}.$
	\item\label{CFcond2} Write $L(\alpha) := F_{(\theta, \beta)}(\theta = \theta_0, \beta = \beta_0; \alpha)$, so that $L: H^3_{\text{odd}} \times \mathbb{R} \to L^2_{\text{even}}$ is a linear operator.
	Then $L(\alpha_0)$ is Fredholm with Fredholm index $0$, for some $\alpha_0$.
	\item\label{CFcond3} $F \in C^2(H^3_{\text{odd}} \times \mathbb{R}\times\mathbb{R}, L^2_{\text{even}}).$ 
	\item\label{CFcond4} (Transversality) There exists $\phi_{0}\in H^3_{\text{odd}} \times \mathbb{R}$ such that $\ker L(\alpha_0)= \mathrm{span}[\phi_0]$, 
	satisfying $L_{\alpha}(\alpha_0)\phi_0 \notin \mathrm{range}(L(\alpha_0)).$
\end{enumerate} 
We note that the linear operator $L$ in this section is not the same as the length of the curve, also denoted $L,$ in the previous section.  We allow this notational 
collision because we believe that in context it will always be clear what is meant by $L.$
We proceed to verify the four required items.

To check \ref{CFcond1}, note that $F(\theta=0, \beta=1;\alpha) = 0$ for all $\alpha\in\mathbb{R},$ so we may take $(\theta_0, \beta_0) = (0,1).$

To check \ref{CFcond2}, we compute $L(\alpha_0)$ and verify that we may choose $\alpha_0$ so that $L(\alpha_0)$ is Fredholm with Fredholm index $0$. 
Linearizing $F$ about $\theta=0,$ $\beta=1,$ it is straightforward to compute that $L$ is given as follows:
\[ 
L(\alpha)(v, \gamma) = (\alpha -1 ) v_\sigma + \alpha^2(\alpha +3) v_{\sigma\sigma\sigma} - \gamma.
\]
The definition of the index of $L(\alpha)$ is 
\[
\mathrm{ind} L = \dim \ker L - \dim \mathrm{coker}~ L = \dim \ker L - \dim \left( L^2_{\text{even}}\setminus \mathrm{range}~(L)\right),
\]
so we must understand both this kernel and cokernel.
We first determine the kernel of $L(\alpha)$ by expanding $v$ in the Fourier sine series.
If we take the expansion
\begin{equation}\nonumber
v(\sigma)=\sum_{k\geq1}\hat{v}(k)\sin(k\sigma),
\end{equation}
then we may write $L(\alpha)(v,\gamma)$ as
\begin{equation}\nonumber
L(\alpha)(v,\gamma)=
\sum_{k \geq 1} k[(\alpha- 1) - \alpha^2(\alpha +3) k^2] \hat{v}(k)\cos(k\sigma) - \gamma.
\end{equation}
Setting this equal to zero, we immediately see we must have $\gamma=0.$  Then, for the element of the kernel to be nontrivial, for
some $k_{0}$ we must have
\begin{equation}\label{3rdDegPol}
	q(\alpha)=(\alpha- 1) - \alpha^2(\alpha +3) k_0^2 = 0.
\end{equation} 
Since the left-hand side is a cubic polynomial in $\alpha,$ it can have either one real root and two complex roots (that are a conjugate pair) or three real roots
(counting multiplicity).
Note that for a cubic polynomial of the form $ax^3 + bx^2 + cx + d$, the discriminant is given by \cite{Irving2004}
\[
\Delta =18abcd - 4 b^3 d + b^2 c^2 - 4ac^3 - 27 a^2 d^2.
\]
If $\Delta \geq 0$ then the polynomial has three real roots (in the case $\Delta=0$ this means real repeated roots), and if $\Delta < 0$, the polynomial has one real root and two complex roots.
In \eqref{3rdDegPol}, we have 
\[
a = - k_0^2, \quad b = -3k_0^2, \quad c = 1, \quad d = -1,
\]
so that upon simplifying we obtain
\[
\Delta = - 72k_0^4 - 108k_0^3 + 4k_0^2 = 4k_0^2(-18k_0^2 - 27k_0 + 1).
\]
Clearly, $-18k_0^2 - 27k_0 + 1<0$ for every $k_0 \geq 1,$ and so \eqref{3rdDegPol} must have a unique real solution, say $\alpha = \alpha_0$.
Furthermore, for any $k_{1}\neq k_{0},$ we have
\begin{equation}\nonumber
(\alpha_{0}-1)-\alpha_{0}^{2}(\alpha_{0}+3)k_{1}^{2}\neq0;
\end{equation}
we can see this because taking the derivative of $(\alpha_{0}-1)-\alpha_{0}^{2}(\alpha_{0}+3)k^{2}$ with respect to $k$ indicates that it is strictly monotone in $k$
for $k\geq1.$
From here, for a fixed integer $k_0 \geq 1,$ we see that $L(\alpha_0)(v=  \sin(k_0 \sigma), \gamma = 0) = 0.$
Therefore, we conclude that $\ker L(\alpha_0)= \mathrm{span}([\sin(k_0 \sigma), 0])$.
It follows that $\dim \ker L(\alpha_0) = 1$.

We now compute $\dim \left( L^2_{\text{even}}\setminus \text{range}(L(\alpha_{0}))\right).$ 
From the preceding paragraph, it is clear that 
\begin{equation}\nonumber
\text{range}(L(\alpha_0)) = \overline{\text{span}}\{ \cos(kx): k = 0, 1, \ldots ~ \mathrm{and}~k \neq k_0 \},
\end{equation}
where the closure is taken with respect to the norm in $L^{2}_{\text{even}}.$
Thus, $L^2_{\text{even}}\setminus \text{range}(L(\alpha_{0})) = \mathrm{span}~ [\cos(k_0x)].$ 
As such, 
\begin{equation}\nonumber
\dim \left(L^2_{\text{even}}\setminus \text{range}(L(\alpha_{0}))\right) = 1,
\end{equation} 
and so we find that 
$L(\alpha_0)$ is Fredholm with index $0$.

Since \ref{CFcond3} is clear, 
it only remains to show the second part of \ref{CFcond4}.  Specifically, we must show 
$L_{\alpha}(\alpha_0)(\sin(k_0 \sigma), 0) \notin \mathrm{range}(L(\alpha_0)).$
Now, since
\[
	L_{\alpha}(v, \gamma) = v_\sigma + 3\alpha (\alpha + 2) v_{\sigma\sigma\sigma},
\]
we have
\[ L_{\alpha}(\alpha_0)(\sin(k_0 \sigma), 0) = k_{0} (- 1 + 3 k_{0}^2 \alpha_0 (\alpha_0 + 2)) \cos(k_0x).
\]
We must determine whether the coefficient of $\cos(k_{0}x)$ here is nonzero.  We know $k_{0}$ is nonzero, so it is enough to consider
whether $-1+3k_{0}^{2}\alpha_{0}(\alpha_{0}+2)=0.$  We let $p(\alpha)$ be the quadratic polynomial
\begin{equation}\nonumber
p(\alpha)=-1+3k_{0}^{2}\alpha(\alpha+2)=3\alpha^{2}k_{0}^{2}+6\alpha k_{0}^{2}-1.
\end{equation}
We rewrite the cubic polynomial $q(\alpha)$ as
\begin{equation}\nonumber
q(\alpha)=-k_{0}^{2}\alpha^{3}-3k_{0}^{2}\alpha^{2}+\alpha-1.
\end{equation} 
Since $\alpha_{0}$ is the unique real root of $q,$ if we can conclude that $p$ and $q$ have no roots in common, then we have
that $p(\alpha_{0})\neq0.$  If the resultant of $p$ and $q$ is nonzero, then $p$ and $q$ have no roots in common \cite{gelfand}.  
The resultant of $p$ and $q$ is the determinant of their Sylvester matrix, namely
\begin{equation}\nonumber
\text{res}(p,q)=\left|
\begin{array}{ccccc}
-k_{0}^{2} & -3k_{0}^{2} & 1 & -1 & 0 \\
0 & -k_{0}^{2} & -3k_{0}^{2} & 1 & -1 \\
3k_{0}^{2} & 6k_{0}^{2} & -1 & 0 & 0 \\
0 & 3k_{0}^{2} & 6k_{0}^{2} & -1 & 0 \\
0 & 0 & 3k_{0}^{2} & 6k_{0}^{2} & -1 
\end{array}
\right|
=4k_{0}^{2}(27k_{0}^{4}+18k_{0}^{2}-1).
\end{equation}
The right-hand side is positive for $k_{0}\geq1,$ so we conclude $p(\alpha_{0})\neq0.$
Since we have shown that $L_{\alpha}(\alpha_0)(\sin(k_0 \sigma), 0)$ is a non-zero multiple of $\cos(k_0x)$, we see that 
\[
L_{\alpha}(\alpha_0)(\sin(k_0 \sigma), 0) \notin \mathrm{range}(L(\alpha_0)) = \{ \cos(kx): k = 0, 1, \ldots ~ \mathrm{and}~k \neq k_0 \}.
\]
This concludes the proof of transversality.

Having checked the required conditions, an application of the Crandall-Rabinowitz theorem (Theorem \ref{Crandall-Rabinowitz}) yields the desired result.
\end{proof}

We have the corresponding result on existence of traveling waves using the linear velocity \eqref{LinModel}.  In this case, we seek to solve $G(\theta,\beta;\alpha)=0,$
where
\begin{equation}\nonumber
G(\theta,\beta;\alpha)=1+(\alpha-1)\frac{\theta_{\sigma}}{s_{\sigma}}+\frac{4\theta_{\sigma\sigma\sigma}}{s_{\sigma}^{3}}-\beta\cos(\theta).
\end{equation}
The function spaces $X$ and $Z$ remain unchanged, namely $X=H^{3}_{\text{odd}}\times\mathbb{R},$ and $Z=L^{2}_{\mathrm{even}}.$

\begin{theorem}\label{secondExistenceTheorem}
For each integer $k_0 \geq 1,$ let $\alpha_{1}=4k_{0}^{2}+1.$  The coordinate-free model with normal velocity \eqref{LinModel} 
has infinitely many traveling solutions satisfying \eqref{UTravelingEquation}, \eqref{VTravelingEquation} 
in the neighbourhood of $(\theta = 0, \beta = 1, \alpha = \alpha_1)$, 
i.e. there is a nontrivial continuously differentiable curve 
\[
	\{ (\theta(\varpi), \beta(\varpi), \alpha(\varpi)): \varpi \in (-\delta, \delta), ~\text{and}~ (\theta(0), \beta(0), \alpha(0)) = (0,1,\alpha_1) \},
\]
such that
$G(\theta(\varpi), \beta(\varpi), \alpha(\varpi)) = 0$ for $\varpi \in (-\delta, \delta)$.
\end{theorem}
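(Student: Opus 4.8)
The plan is to mirror, essentially verbatim, the proof of Theorem \ref{firstExistenceTheorem}, since the linear model \eqref{LinModel} is just the special case of \eqref{NonLinModel} in which the quadratic and cubic curvature terms are dropped and the coefficient $\alpha^2(\alpha+3)$ of the third-derivative term is replaced by the constant $4$. Accordingly, I would verify the four hypotheses \ref{CFcond1}--\ref{CFcond4} of the Crandall--Rabinowitz theorem (Theorem \ref{Crandall-Rabinowitz}) for the map $G$ on $X = H^3_{\text{odd}}\times\mathbb{R}$ and $Z = L^2_{\text{even}}$.

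First I would check \ref{CFcond1}: since $\cos(0)=1$ and $\theta_\sigma$ vanishes at $\theta=0$, we have $G(0,1;\alpha)=1-1=0$ for every $\alpha\in\mathbb{R}$, so we take $(\theta_0,\beta_0)=(0,1)$. Next, for \ref{CFcond2}, linearizing $G$ about $(\theta=0,\beta=1)$ is straightforward because $1/s_\sigma\to 1/1$ at $\theta=0$ (note $s_\sigma = L/(2\pi) \to 1$ when $\theta\equiv0$, since $L\to 2\pi$ by \eqref{lengthFormula}), giving the linear operator
\begin{equation}\nonumber
\tilde L(\alpha)(v,\gamma) = (\alpha-1)v_\sigma + 4 v_{\sigma\sigma\sigma} - \gamma.
\end{equation}
Expanding $v(\sigma)=\sum_{k\ge1}\hat v(k)\sin(k\sigma)$ yields
\begin{equation}\nonumber
\tilde L(\alpha)(v,\gamma) = \sum_{k\ge1} k\bigl[(\alpha-1) - 4k^2\bigr]\hat v(k)\cos(k\sigma) - \gamma,
\end{equation}
so setting this to zero forces $\gamma=0$ and a nontrivial kernel element requires $(\alpha-1)-4k_0^2=0$ for some $k_0\ge1$, i.e. $\alpha=\alpha_1:=4k_0^2+1$. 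Since the map $k\mapsto (\alpha_1-1)-4k^2 = 4k_0^2 - 4k^2$ is strictly monotone in $k\ge1$, it vanishes only at $k=k_0$, so $\ker \tilde L(\alpha_1) = \mathrm{span}[(\sin(k_0\sigma),0)]$ is one-dimensional, and exactly as in the previous proof $\mathrm{range}(\tilde L(\alpha_1)) = \overline{\mathrm{span}}\{\cos(k\sigma): k\ge0,\ k\neq k_0\}$ has codimension one in $L^2_{\text{even}}$, spanned in complement by $\cos(k_0\sigma)$. Hence $\tilde L(\alpha_1)$ is Fredholm of index zero. Condition \ref{CFcond3} is clear since $G$ is a polynomial in $\theta_\sigma,\theta_{\sigma\sigma\sigma},1/s_\sigma,\beta$ composed with the smooth map $\theta\mapsto\cos\theta$ and $s_\sigma$ depends smoothly on $\theta$ through \eqref{lengthFormula} near $\theta=0$.

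The only remaining item is the transversality condition \ref{CFcond4}. Differentiating $\tilde L(\alpha)$ in $\alpha$ gives $\tilde L_\alpha(v,\gamma) = v_\sigma$ (the $4v_{\sigma\sigma\sigma}$ term has no $\alpha$ dependence, and $-\gamma$ has none), so
\begin{equation}\nonumber
\tilde L_\alpha(\alpha_1)(\sin(k_0\sigma),0) = k_0\cos(k_0\sigma),
\end{equation}
which is a nonzero multiple of $\cos(k_0\sigma)$ and therefore does not lie in $\mathrm{range}(\tilde L(\alpha_1))$. This is in fact simpler than the nonlinear case, where one needed a resultant computation to rule out a common root of a quadratic and a cubic; here the coefficient $k_0$ is manifestly nonzero. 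Having verified \ref{CFcond1}--\ref{CFcond4}, Theorem \ref{Crandall-Rabinowitz} applies and yields the asserted $C^1$ bifurcation curve through $(\theta,\beta,\alpha)=(0,1,\alpha_1)$. I do not anticipate a genuine obstacle: the one point requiring a little care is the linearization of the $1/s_\sigma^j$ factors, where one must confirm that the $\sigma$-derivative of $s_\sigma$ in any direction $v\in H^3_{\text{odd}}$ contributes nothing to the linearization at $\theta=0$ because it multiplies $\theta_\sigma\equiv0$; this is the same observation already used implicitly in the proof of Theorem \ref{firstExistenceTheorem}.
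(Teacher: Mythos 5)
Your proposal is correct and follows essentially the same route as the paper's own proof: linearize $G$ at $(\theta,\beta)=(0,1)$ to get $(\alpha-1)v_\sigma+4v_{\sigma\sigma\sigma}-\gamma$, identify the one-dimensional kernel and cokernel at $\alpha_1=4k_0^2+1$, and observe that $L_\alpha(\alpha_1)(\sin(k_0\sigma),0)=k_0\cos(k_0\sigma)$ lies in the cokernel, so transversality is immediate. The paper likewise notes that this case is simpler than Theorem \ref{firstExistenceTheorem} precisely because no resultant computation is needed.
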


\begin{proof} We again have to verify the four items necessary for application of Theorem \ref{Crandall-Rabinowitz}.  We focus on the differences in the proof 
in the present situation and the proof of Theorem \ref{firstExistenceTheorem}.

In the present case, the linearization, $L(\alpha),$ is given by
\begin{equation}\nonumber
L(\alpha)(v,\gamma)=(\alpha-1)v_{\sigma}+4v_{\sigma\sigma\sigma}-\gamma
\\
=\sum_{k\geq1}k\left((\alpha-1)-4k^{2}\right)\hat{v}(k)\cos(k\sigma).
\end{equation}
This has one-dimensional kernel at $\alpha_{1}=4k_{0}^{2}+1$ for some $k_{0}\in\mathbb{N},$ $k_{0}\geq1.$
It is also clear, similarly to the proof of Theorem \ref{firstExistenceTheorem}, that $L(\alpha_{1})$ has one-dimensional cokernel 
(in fact, not just the dimension is the same, but the cokernel is the same as in the previous proof).  So, we have again concluded
that $L(\alpha_{1})$ is Fredholm of index zero.

We then need to check the transversality condition.  This requires checking that $L_{\alpha}(\alpha_{1})(\sin(k_{0}\sigma),0)\notin\text{range}(L(\alpha_{1}).$
But $L_{\alpha}(\alpha)(v,\gamma)=v_{\sigma},$ so 
\begin{equation}\nonumber
L_{\alpha}(\alpha_{1})(\sin(k_{0}\sigma),0)=k_{0}\cos(k_{0}\sigma).
\end{equation}  
This is in the cokernel, rather than
being in the range.  This completes the proof.
\end{proof}

%%%%%%%%%%%%%%%%%%%%%%%%%%%
\section{Computations of Traveling Waves}\label{computationSection}
%%%%%%%%%%%%%%%%%%%%%%%%%%%%
In this section the coordinate-free models for flame fronts are numerically approximated with both choices of velocity closure 
(linear in curvature and nonlinear in curvature).  Traveling waves are computed in both models.  In the linear velocity closure, where the model is well posed in the parameter regime where traveling waves exist, the stability of the traveling waves are investigated by evolving perturbed waves in the time dynamic problem.  The numerical method for the evolution is that developed in \cite{ANZIAM}. 

Traveling waves are computed using a method similar to that in \cite{WW}, \cite{ZAMP}, where the solution is represented as a truncated Fourier series, 
\begin{equation}\label{FourierWave} \theta(\sigma)=\sum_{n=-Nx/2}^{Nx/2} a_ne^{in\sigma}.\end{equation}
As discussed above, both models support odd real solutions, so the $a_n$ are taken to be pure imaginary and odd in $n$.  A single wave profile \eqref{FourierWave} includes $Nx/2$ unknown Fourier coefficients.  The speed $\beta$ and the parameter $\alpha$ are also treated as unknowns.  The projection of the model equation into Fourier space gives $Nx/2$ equations (each Fourier mode of a real even function set to zero).  The system is closed with a specification of the wave amplitude. The resulting   nonlinear system of equations is solved via quasi-Newton iteration. The maximum of $\theta$ is chosen as a measure of amplitude.  Small amplitude waves are computed with an asymptotic initial guess, described below.  Larger amplitudes are computed with numerical continuation, similar to \cite{akers2010dynamics}, \cite{cho2014computation},
\cite{claassen2018numerical}, \cite{guan2022new}.

\begin{figure}[tp]
\centerline{\includegraphics[width=0.3\textwidth]{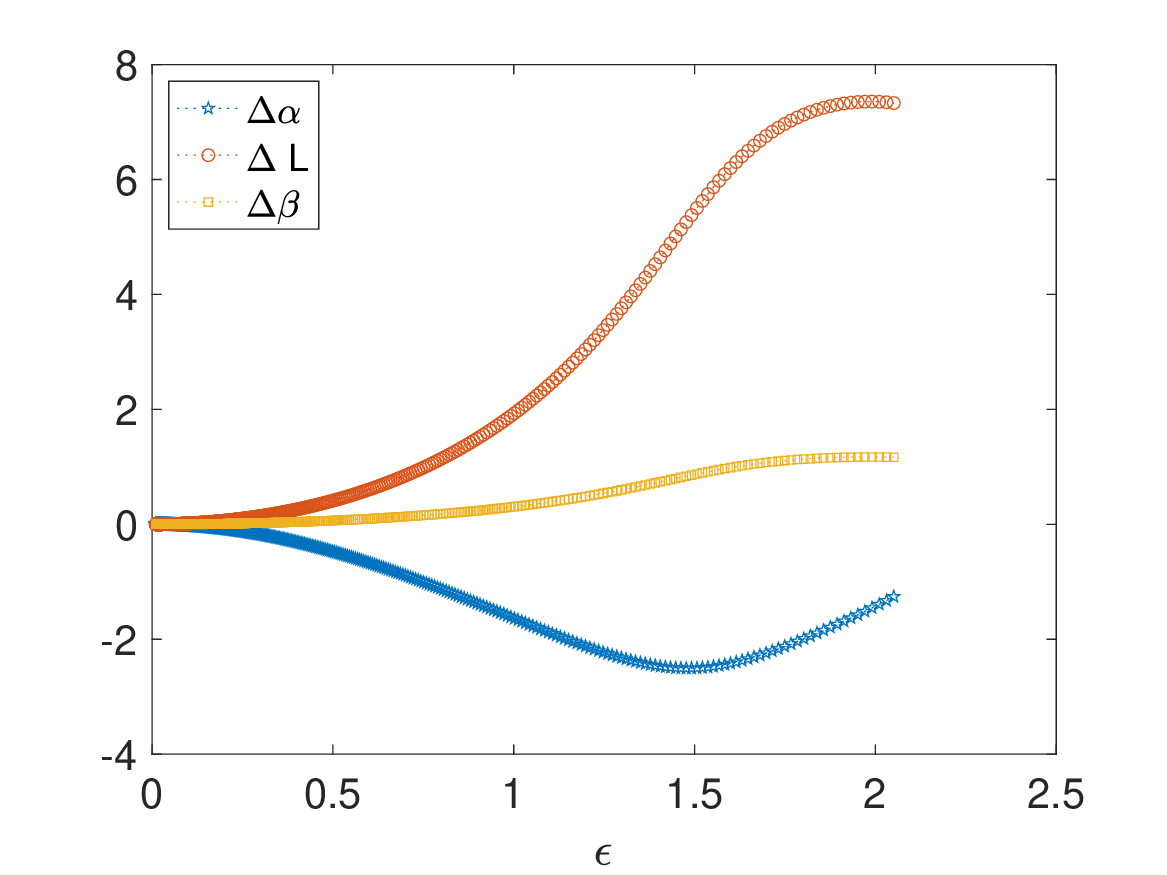}\includegraphics[width=0.3\textwidth]{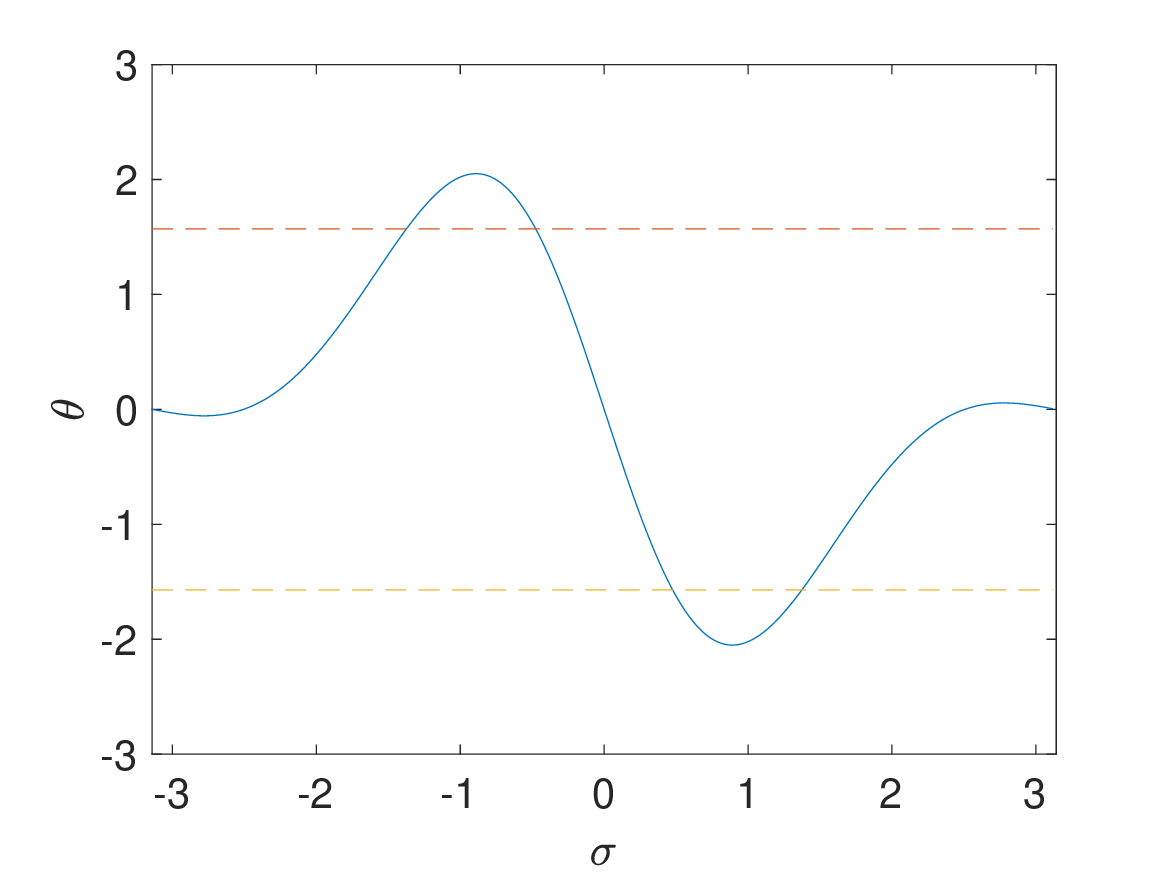}\includegraphics[width=0.3\textwidth]{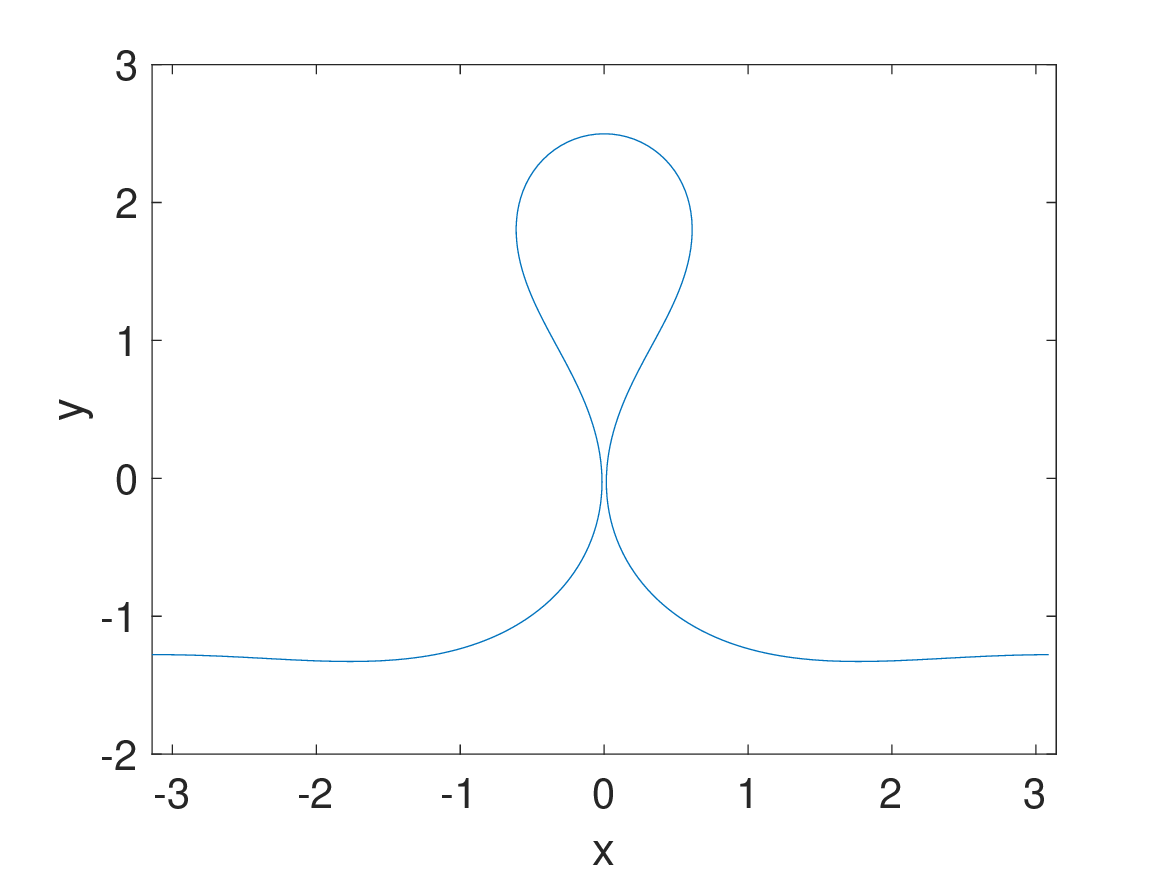}}
\caption{\it \textbf{Left:} The change in parameters $L,\alpha$ and $\beta$ over a computed branch of solutions to \eqref{StandingLinModel}, starting with $\alpha=5, \beta=1,L=2\pi$. \textbf{Center:} The standing wave's tangent angle $\theta$ on the largest wave on this part of the branch. The dotted lines mark $|\theta|=\pi/2$. \textbf{Right:} The standing wave's interface profile of the largest wave on this part of the branch.  The branch limits on a self-intersecting profile. \label{LinUBranch}  }
\end{figure}

\begin{figure}[tp]
\centerline{\includegraphics[width=0.3\textwidth]{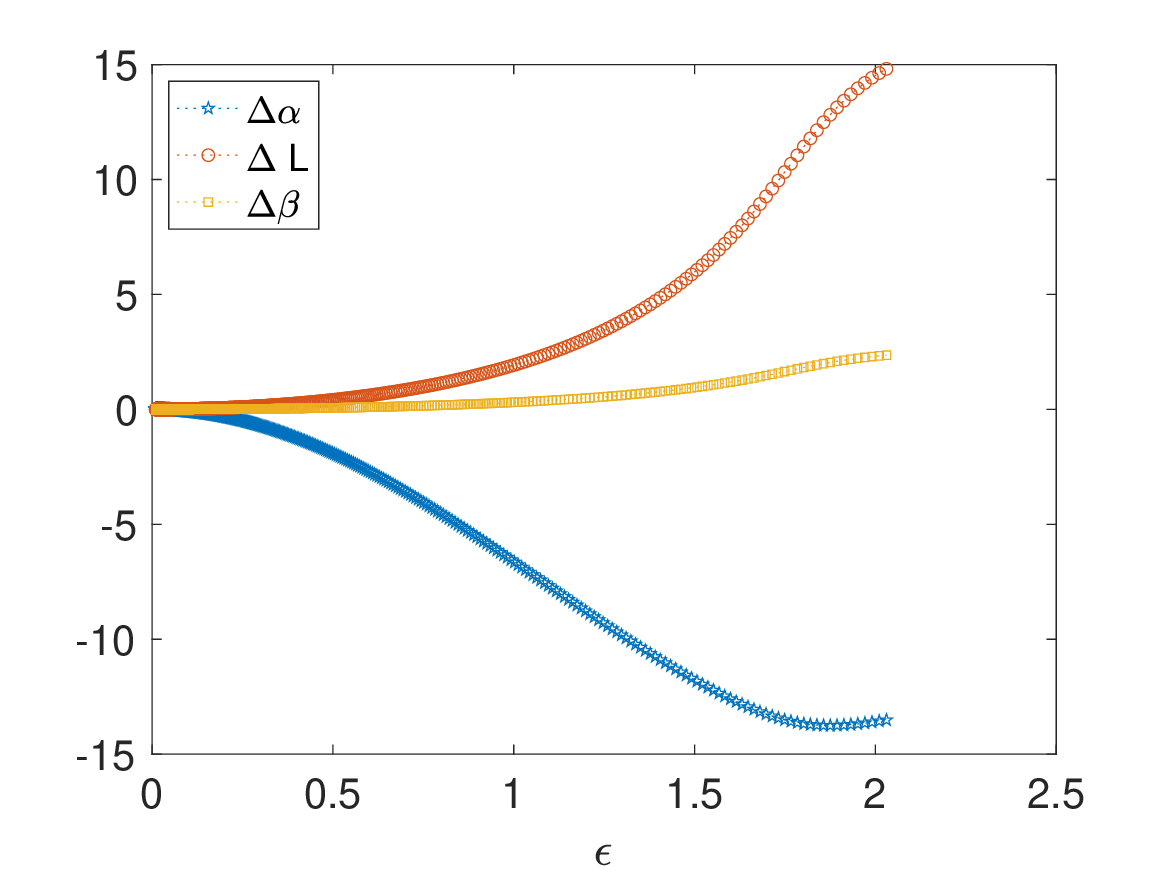}\includegraphics[width=0.3\textwidth]{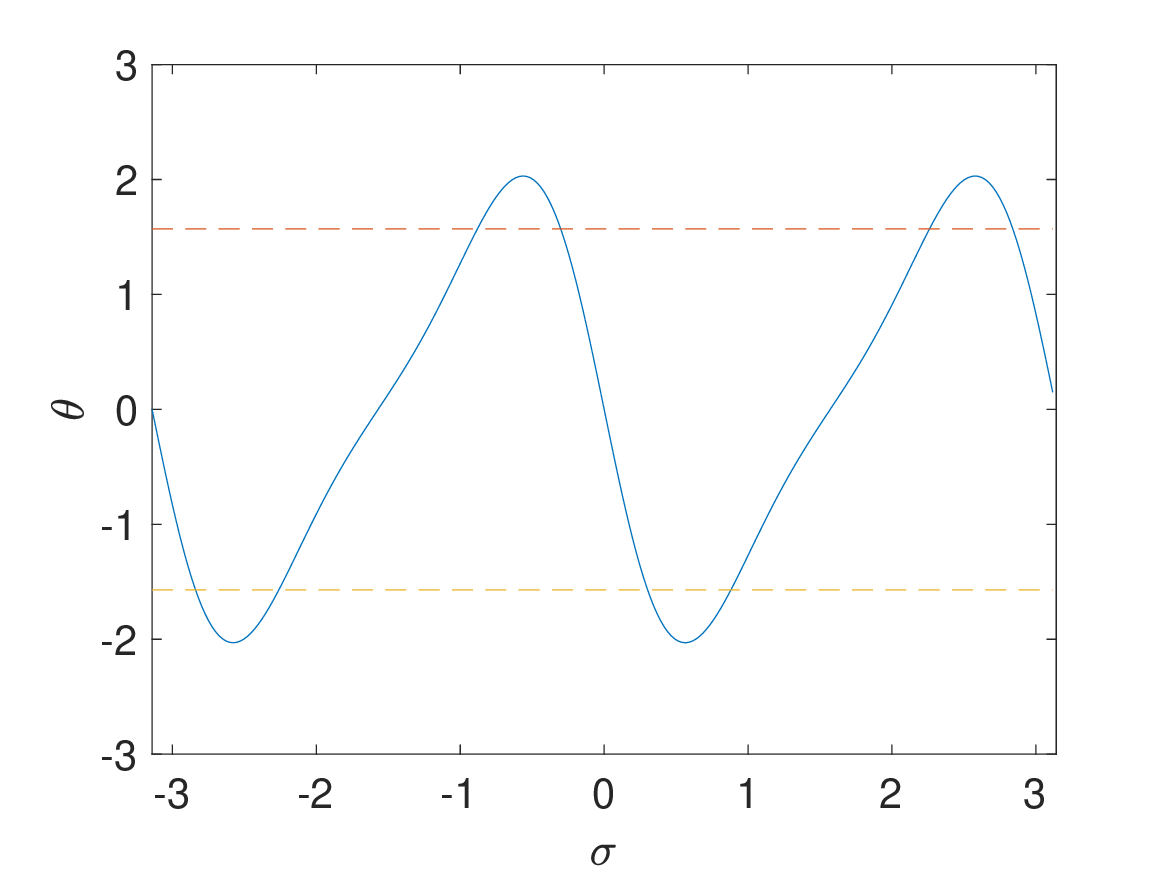}\includegraphics[width=0.3\textwidth]{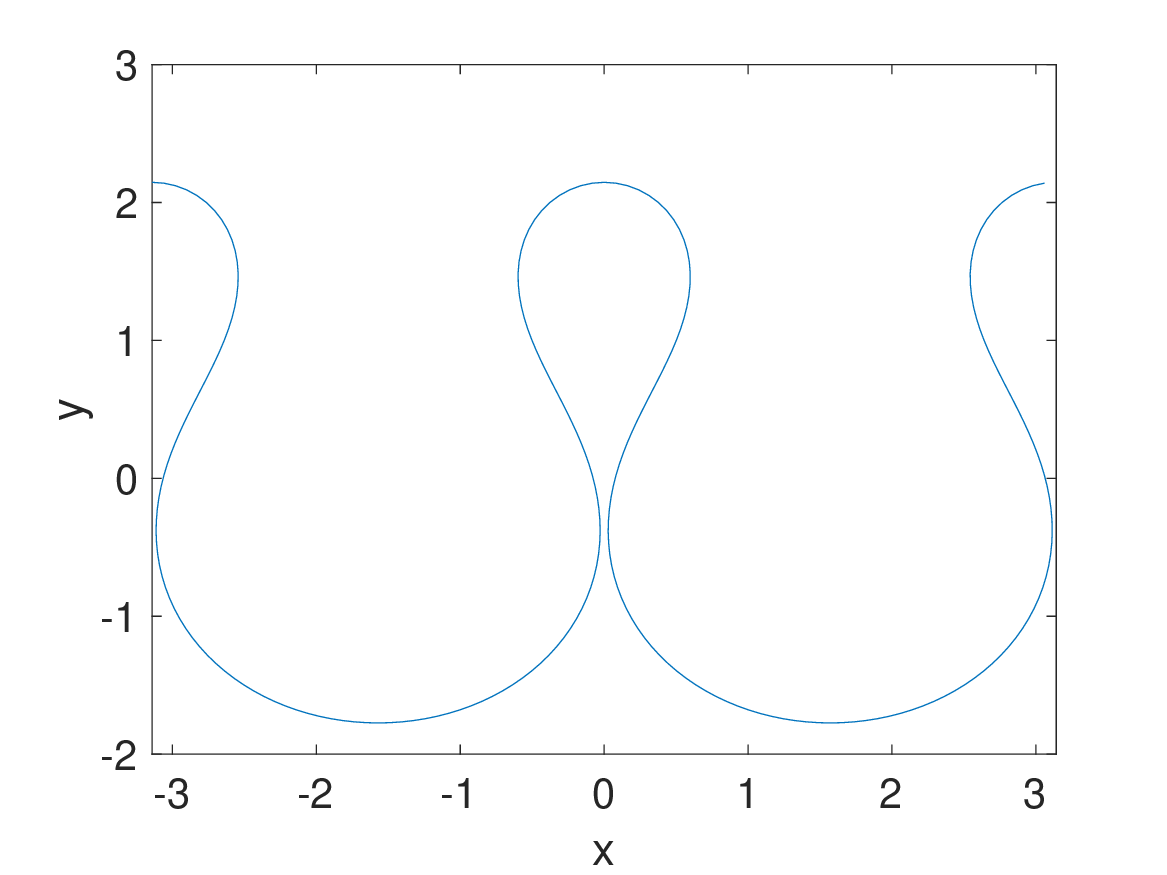}}
\caption{\it \textbf{Left:} The change in parameters $L,\alpha$ and $\beta$ over a computed branch of solutions to \eqref{StandingLinModel}, starting with $\alpha=17, \beta=1,L=2\pi$. \textbf{Center:} The standing wave's tangent angle $\theta$ on the largest wave on this part of the branch. The dotted lines mark $|\theta|=\pi/2$. \textbf{Right:} The standing wave's interface profile of the largest wave on this part of the branch.  The branch limits on a self intersecting profile. \label{LinUBranch2}  }
\end{figure}

\begin{figure}[tp]
\centerline{\includegraphics[width=0.3\textwidth]{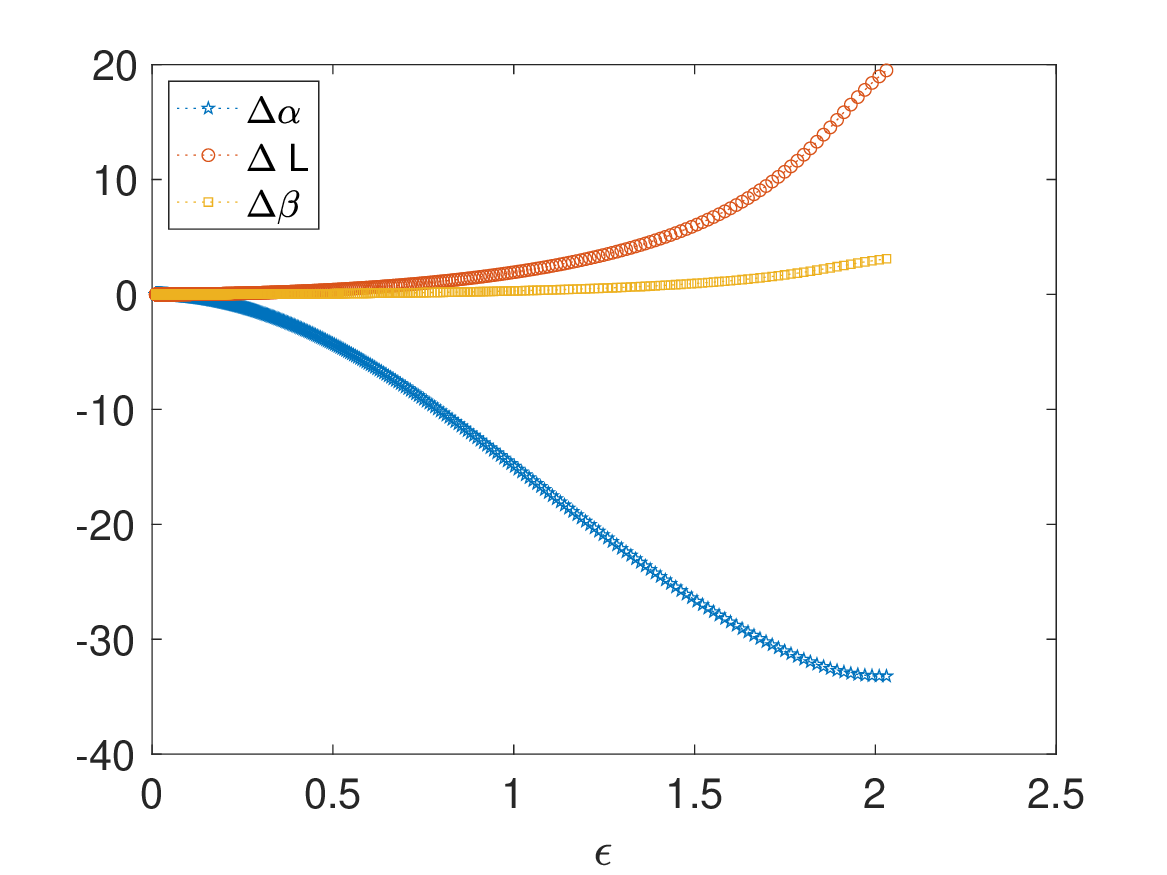}\includegraphics[width=0.3\textwidth]{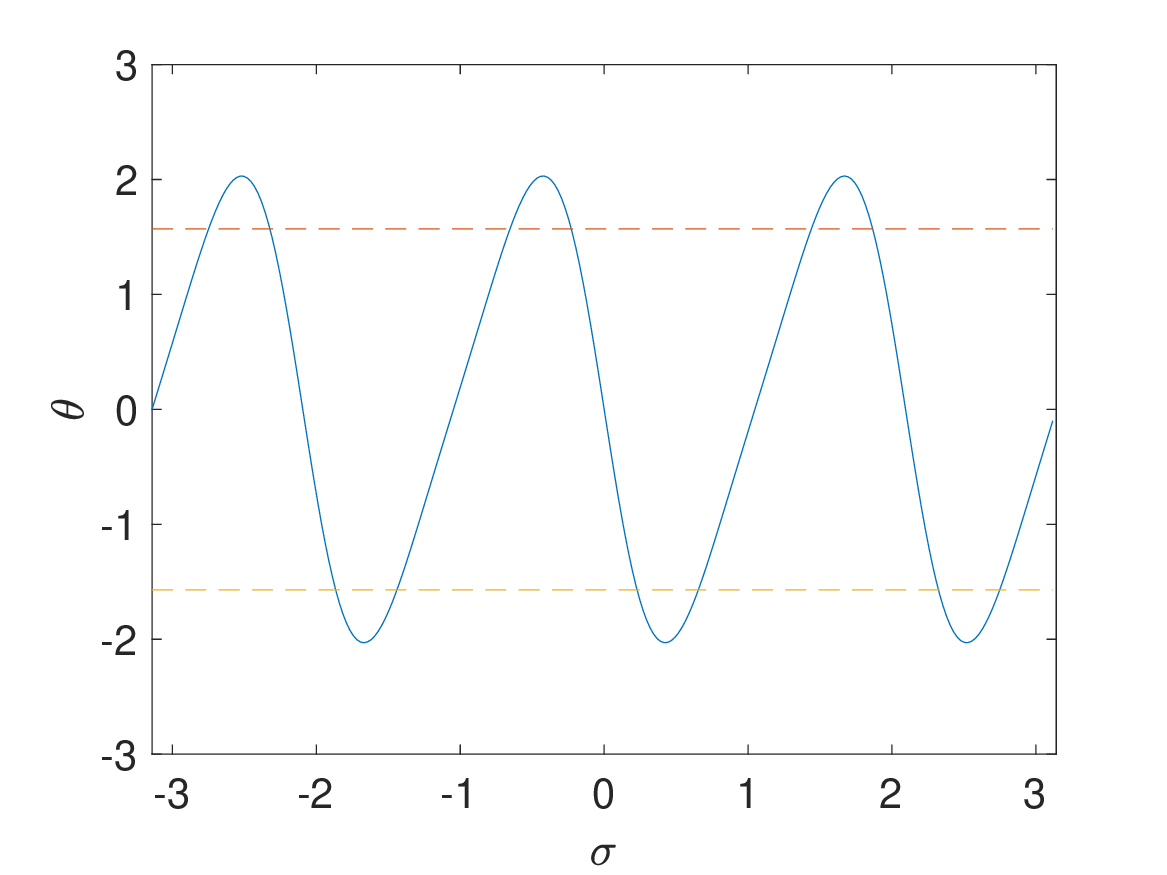}\includegraphics[width=0.3\textwidth]{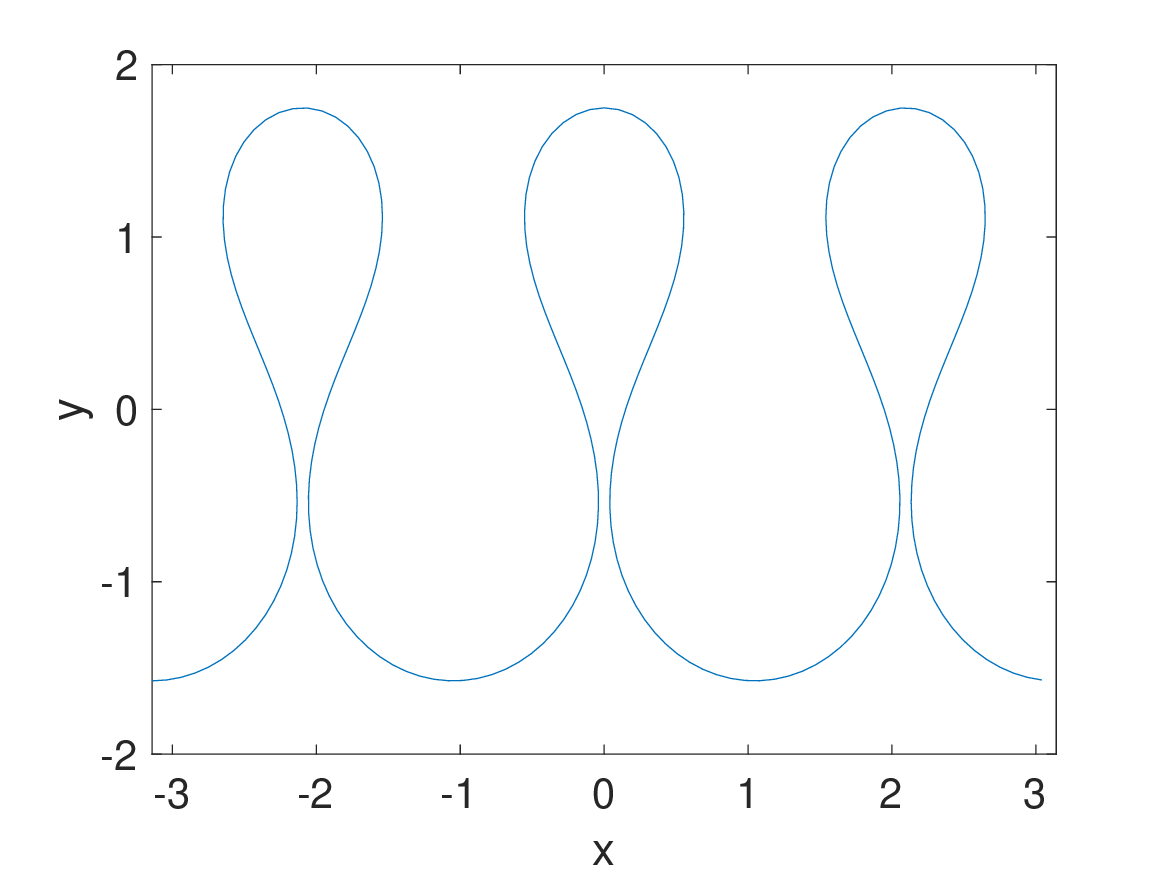}}
\caption{\it \textbf{Left:} The change in parameters $L,\alpha$ and $\beta$ over a computed branch of solutions to \eqref{StandingLinModel}, starting with $\alpha=37, \beta=1,L=2\pi$. \textbf{Center:} The standing wave's tangent angle $\theta$ on the largest wave on this part of the branch. The dotted lines mark $|\theta|=\pi/2$. \textbf{Right:} The standing wave's interface profile of the largest wave on this part of the branch.  The branch limits on a self intersecting profile. \label{LinUBranch3}  }
\end{figure}

\subsubsection{Linear Velocity Model}
For model \eqref{LinModel}, the vertically traveling solutions ($\beta\ne0$) solve
\begin{equation}\label{StandingLinModel}
 \beta \cos(\theta)=1+(\alpha-1)\frac{2\pi}{L}\theta_{\sigma}+4\frac{(2\pi)^3}{L^3}\theta_{\sigma\sigma\sigma} 
  \end{equation}
  where $\alpha$ is a physical constant, $\beta$ is the wave's speed and $L$ is the length of the interface as given in \eqref{lengthFormula} above.

The quasi-Newton iteration used to compute solutions to \eqref{StandingLinModel} requires an initial guess.  We look for small amplitude solutions by expanding in $\epsilon$ (as in \cite{akers2014spectrum}, \cite{akers2012wilton}, \cite{creedon2021high},  \cite{nicholls2007boundary}, \cite{nicholls2005analyticity}), 
\begin{subequations}\label{PertExp}
\begin{eqnarray}
\theta&=&\epsilon\theta_1+\epsilon^2\theta_2+O(\epsilon^3),\\
\beta&=&\beta_0+\epsilon\beta_1+\epsilon\beta_2+O(\epsilon^3),\\
\alpha&=&\alpha_0+\epsilon\alpha_1+\epsilon\alpha_2+O(\epsilon^3).
\end{eqnarray}
\end{subequations}
The leading order solution is
\[ \theta_1=A\cos\left(\frac{L\sqrt{\alpha_0-1}}{4\pi}\sigma\right)+B\sin\left(\frac{L\sqrt{\alpha_0-1}}{4\pi}\sigma\right), \]
coupled with
and $L=2\pi+O(\epsilon^2)$ and $\beta_0=1,\beta_1=0$.  Enforcing periodicity for $\sigma\in [0,2\pi)$ gives $\alpha_0=(\frac{4\pi k_{0}}{L})^2+1$ for $k_{0}\in\mathbb{N}$.   At small amplitude $L=2\pi$, so $\alpha_0=(2k_{0})^2+1$ and the lowest non-trivial frequency is $\alpha_0=5$.  Also, we search for waves which are odd in $\sigma$, setting $A=0$ and $B=1$ (the latter can be thought of as a definition of $\epsilon$).   This leading order solution is enough to initialize a quasi-Newton iteration and compute traveling waves.  To justify our choice to expand the parameters $\alpha$ and $\beta$, rather than fixing them along solution branches, we develop the higher order terms in the series in the example case, $k_{0}=1$.\\

The $O(\epsilon^2)$ problem is 
\begin{multline}\nonumber
(\alpha_0-1)\theta_{2,\sigma}+4\theta_{2,\sigma\sigma\sigma} 
\\
=
\beta_2-\beta_0\theta_1^2/2=\beta_2-\frac 12 \sin^2(\sigma)=\beta_2-\frac 14+\frac 14\cos(2\sigma)-\alpha_1\theta_{1,\sigma}.
\end{multline}
This equation is solvable, with solution
\[ \theta_2=-\frac{1}{96}\sin(2\sigma),\qquad \alpha_1=0, \qquad\text{and}\qquad \beta_2=\frac 14.\]

The $O(\epsilon^n)$ problem is 
\begin{equation}\label{OnCorrection}
(\alpha_0-1)\theta_{n,\sigma}+4\theta_{n,\sigma\sigma\sigma}= \beta_n-\alpha_{n-1}\theta_{1,\sigma}+G_n(\theta_{n-1},\alpha_{n-2},\beta_{n-1},...).
\end{equation}
The functions $G_n$ depend on terms which have been determined at previous orders.  The functional dependence of $G_n$ is even in $\sigma$ and orthogonal to $\sin(\sigma)$.  The parameter $\beta_n$ can determined by enforcing that the right hand side of \eqref{OnCorrection} has zero mean.  The parameter $\alpha_{n-1}$ is found by forcing the right hand to be orthogonal to $\cos(\sigma)$.  That the expansions for $\alpha$ and $\beta$ include non-zero terms requires that $\alpha$ and $\beta$ are non-constant in amplitude, and thus can be thought of as part the solution (rather than parameters to be specified).  This is similar to the velocity expansions used to compute Stokes' waves in the water wave problem \cite{craik2005george}.  The expansion in $\alpha$ and $\beta$ is effectively a Lyapunov-Schmidt reduction of this problem, as in 
 \cite{Akers_2025}, \cite{ehrnstrom2019bifurcation}.  
 Branches of traveling waves were computed in both the linear and nonlinear  models, for $k_{0}=1,2$, and $3$.  Each branch of waves bifurcates from an infinitesimal sinusoid at a single value of 
$\alpha$. Global branches of traveling waves are computed, connected to the small amplitude wave.

The results for the linear velocity closure, \eqref{StandingLinModel}, are depicted in Figures \ref{LinUBranch}, \ref{LinUBranch2}, and \ref{LinUBranch3}. 
The left panels depict the changes in the speed $\Delta\beta=\beta-1$, changes in the parameter $\Delta\alpha=\alpha-\alpha_0$ and changes in the length of the curve $\Delta L=L-2\pi$.  The center panel depicts the tangent angle profile, $\theta$, of the extreme wave on the branch.   Dashed horizontal lines mark the angle $\pi/2$; wave profiles with tangent angle above this threshold are overhanging and require coordinate-free models (such as the present models) 
for their computation. Weakly nonlinear models such as the Kuramoto-Sivashinsky
equation are limited only to admit non-overhanging waves, and are therefore insufficient to capture this phenomenon.  The right panel depicts the interface corresponding to the extreme wave.  For model \eqref{StandingLinModel} these waves are approximately self-intersecting.

The method used to compute solutions to \eqref{StandingLinModel} is in terms of the tangent angle $\theta$ and does not require that the wave profile be reconstructed.  We do not observe any singularity in the tangent angle profiles. On the contrary the continuation method is capable of computing waves with larger tangent angle profiles.  Beyond the amplitude ranges reported in Figures \ref{LinUBranch}, \ref{LinUBranch2}, and \ref{LinUBranch3} the tangent angles reconstruct into self intersecting interfaces.  Numerically we define a profile as being near self-intersection if two non-adjacent points on the curve are closer than then the 90\% of the pseudo-arclength grid spacing.  This criterion was used to terminate the computations of branches of waves depicted in Figures \ref{LinUBranch}, \ref{LinUBranch2}, and \ref{LinUBranch3} just before the wave profiles self-intersect.

\begin{figure}[tp]
\centerline{\includegraphics[width=0.5\textwidth]{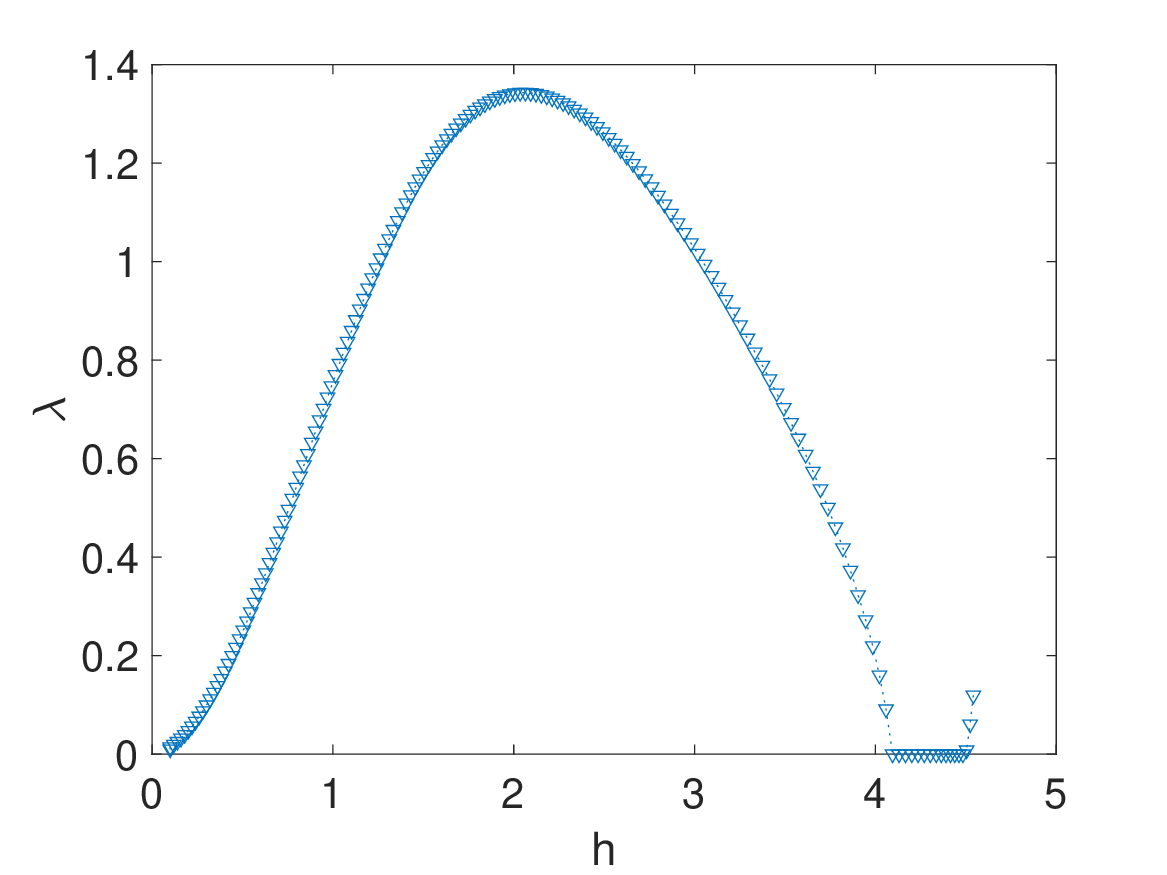}\includegraphics[width=.5\textwidth]{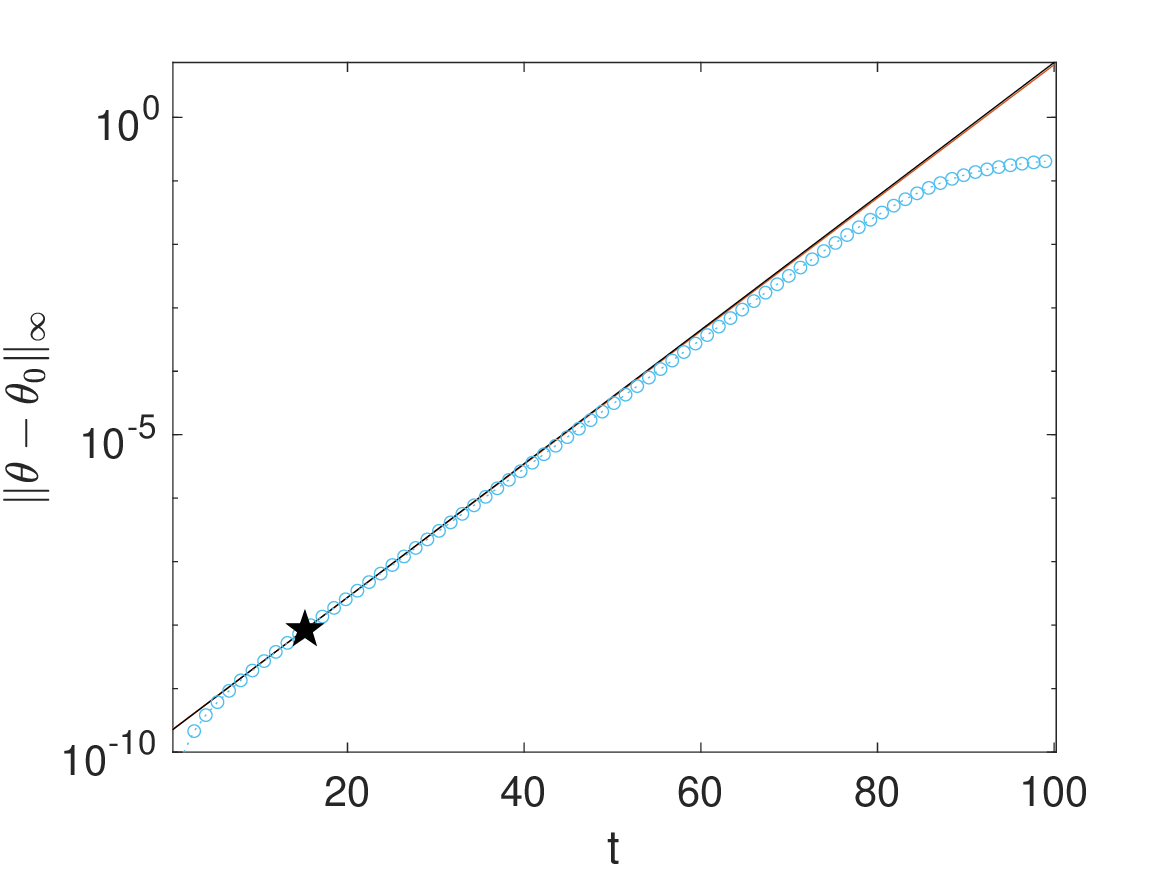}}
\caption{\it \textbf{Left:} Estimates of the eigenvalues of the branch of waves in Figure \ref{LinUBranch}.  The flat state in this configuration is superharmonically stable (but subharmonically unstable). \textbf{Right:} The method used to estimate eigenvalues fits the change in $\theta$ with $y=C\exp(\lambda t)$.  This panel depicts the fit to wave of total displacement $h=0.5044$ from the left panel.  The star marks the location where the growth rate is estimated. \label{LambdaPlot1} }
\end{figure}

\begin{figure}[tp]
\centerline{\includegraphics[width=0.5\textwidth]{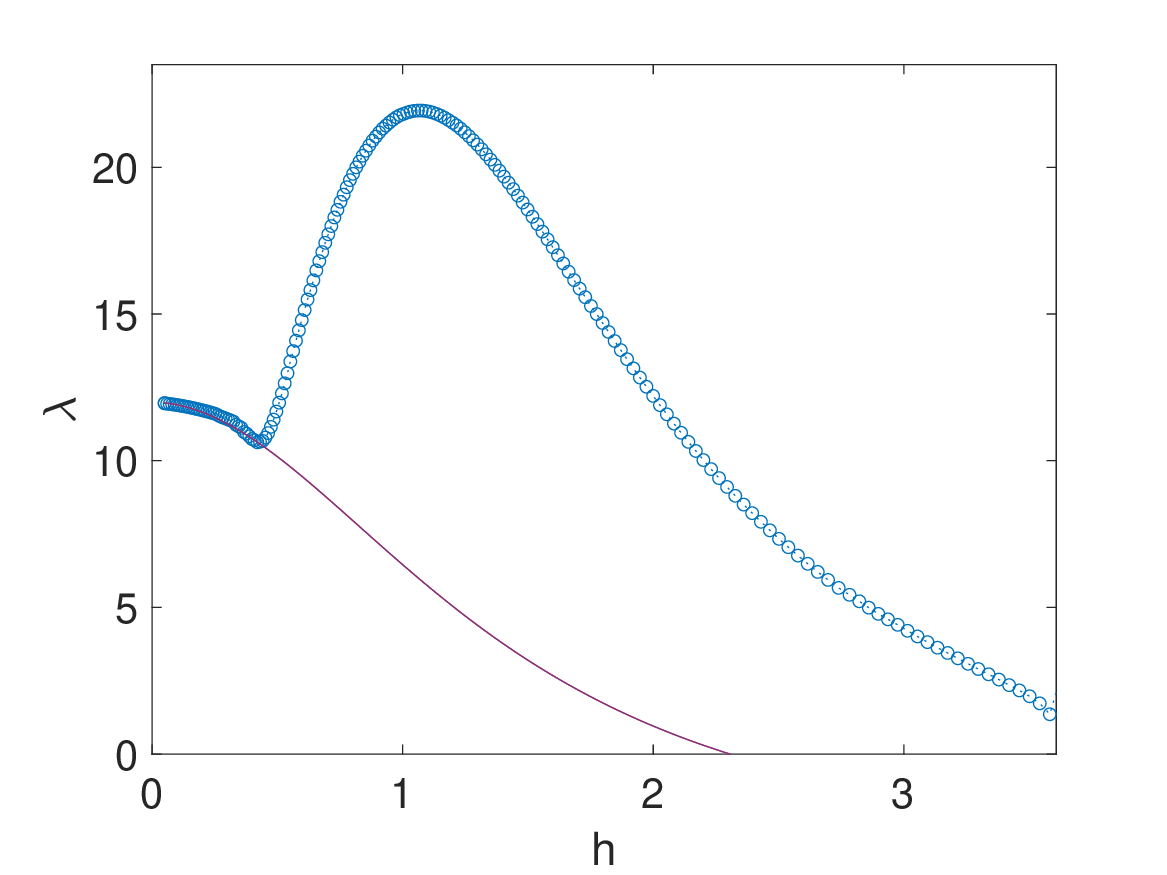}\includegraphics[width=.5\textwidth]{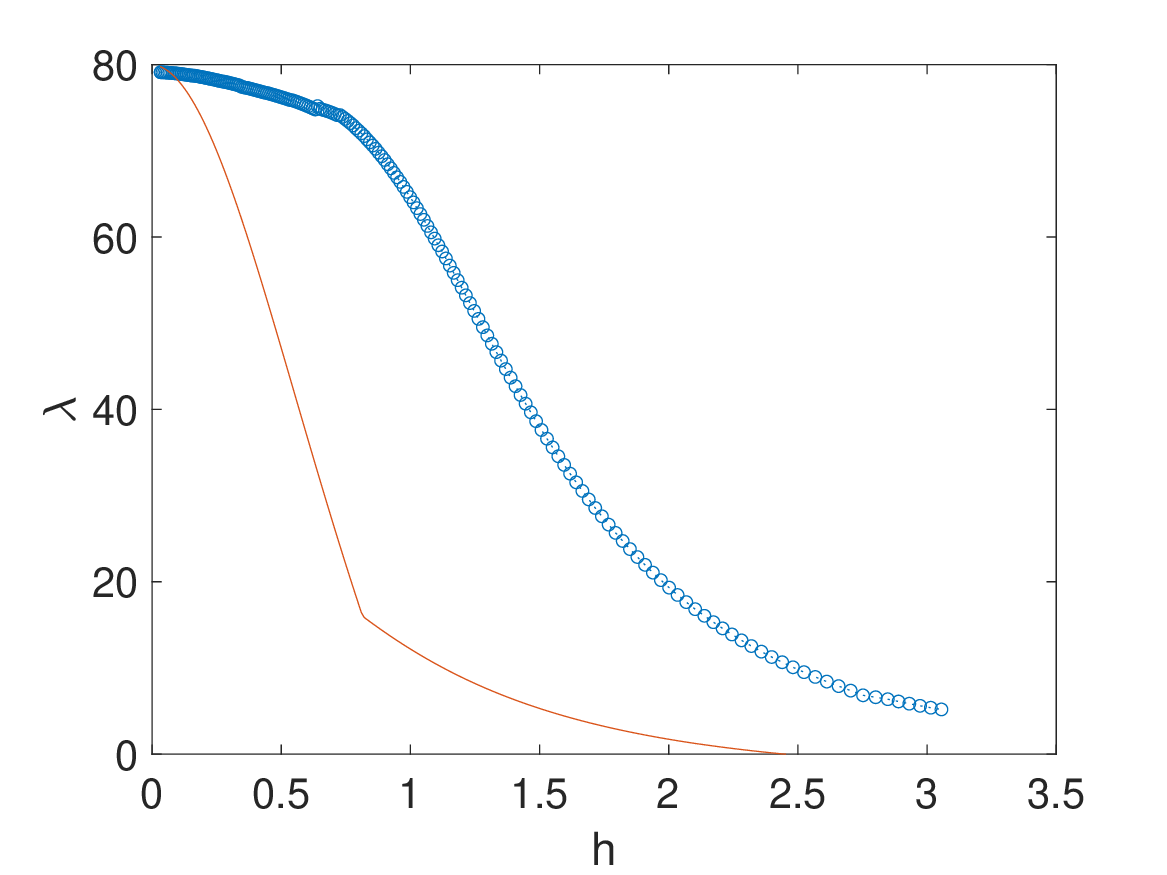}}
\caption{\it \textbf{Left:} Estimates of the eigenvalues of the branch of waves in Figure \ref{LinUBranch2}. \textbf{Right:} Estimates of the eigenvalues of the branch of waves in Figure \ref{LinUBranch3}.  These two branches bifurcate from a flat state configuration which is linearly unstable to supercritical perturbations.  The parameter $\alpha$ varies along the branch.  The maximally unstable supercritical eigenvalue for the flat state at each value of $\alpha$ are plotted for comparison. \label{LambdaPlot2} }
\end{figure}

The wave profiles computed from \eqref{StandingLinModel} exist at values of $\alpha$ for which the time dynamic model is well-posed.  As such one can investigate the stability of these waves.  As a simple test of stability, we evolve these waves in their parent equation using Fourier collocation in space and an IMEX time stepper, a method described in detail in \cite{ANZIAM}.  The linear problem has solutions $\theta(\sigma,t)=\exp(ik\sigma+\lambda t)$ with
\[ \lambda(k)=(-4k^4+(\alpha-1) k^2)\]
which means there are unstable linear modes in the set $0<|k|< 1/2\sqrt{\alpha-1} $.  For $\alpha_0=5$ there are no unstable modes $k\in\mathbb{Z}$, so the flat state problem is superharmonically stable (but subharmonically unstable).  For $\alpha_0=17$ there is one superharmonically unstable eigenfunction (with wavenumber $k=1)$.  For $\alpha_0=35$ there are two superharmonically unstable eigenfunctions (with wavenumbers $k=1$ and $k=2$).   To observe the manifestation of these instabilities we evolve the wave profiles in the time dynamic problem with perturbed initial data
\[ \theta(\alpha,0)=\bar{\theta}(\alpha)+\delta \sin(\alpha)+\delta\sin(2\alpha),\]
with $\delta =10^{-8}$.  We then fit the spatial infinity norm of the difference between the evolution and the initial data to an exponential as an estimate of the growth rate.  This fitting process is depicted in the right panel of Figure \ref{LambdaPlot1}.  The growth rate was estimated at the time when the profile changed by two orders of magnitude. 

Estimates of the most unstable eigenvalue of each wave on the branch in model \eqref{StandingLinModel} are in Figure \ref{LambdaPlot1} and Figure \ref{LambdaPlot2}.  For the $k_{0}=2$ and $k_{0}=3$ branches all of the waves are observed to be unstable.  The small amplitude instability is predicted correctly by the maximally unstable linear solution.  As amplitude increases we observe a transition; the observed instability seems to change which eigenfunction is most unstable.  Since $\alpha$ is changing along the branch, we can compare the observed instability to linear growth rates at for each value of $\alpha$ to the numerically estimated eigenvalues.  These are compared in Figure \ref{LambdaPlot2}.

For the branch of waves computed beginning with $k_{0}=1$, the linear problem is superharmonically stable for all $\alpha$ on the branch.  The time evolution of the nonlinear problem exhibits instability at small amplitude, with eigenvalues that grow with amplitude.  For a narrow range of amplitudes, $4.095\le h\le 4.4836$ we do not observe instability.  It is possible that these waves are also unstable.  For example the instability may be associated with a more exotic eigenfunction or may have an eigenvalue with real part less than $10^{-3}$.  
In future work, a more detailed analysis, such as that in \cite{deconinck2006computing}, \cite{deconinck2011instability}, \cite{trichtchenko2016instability}, could be used to track down instabilities in this case.

%%%%%%%%%%%%%%%%%%%%%%%%%%
\subsubsection{Nonlinear Velocity Model}
%%%%%%%%%%%%%%%%%%%%%%%%%

\begin{figure}[tp]
\centerline{\includegraphics[width=0.3\textwidth]{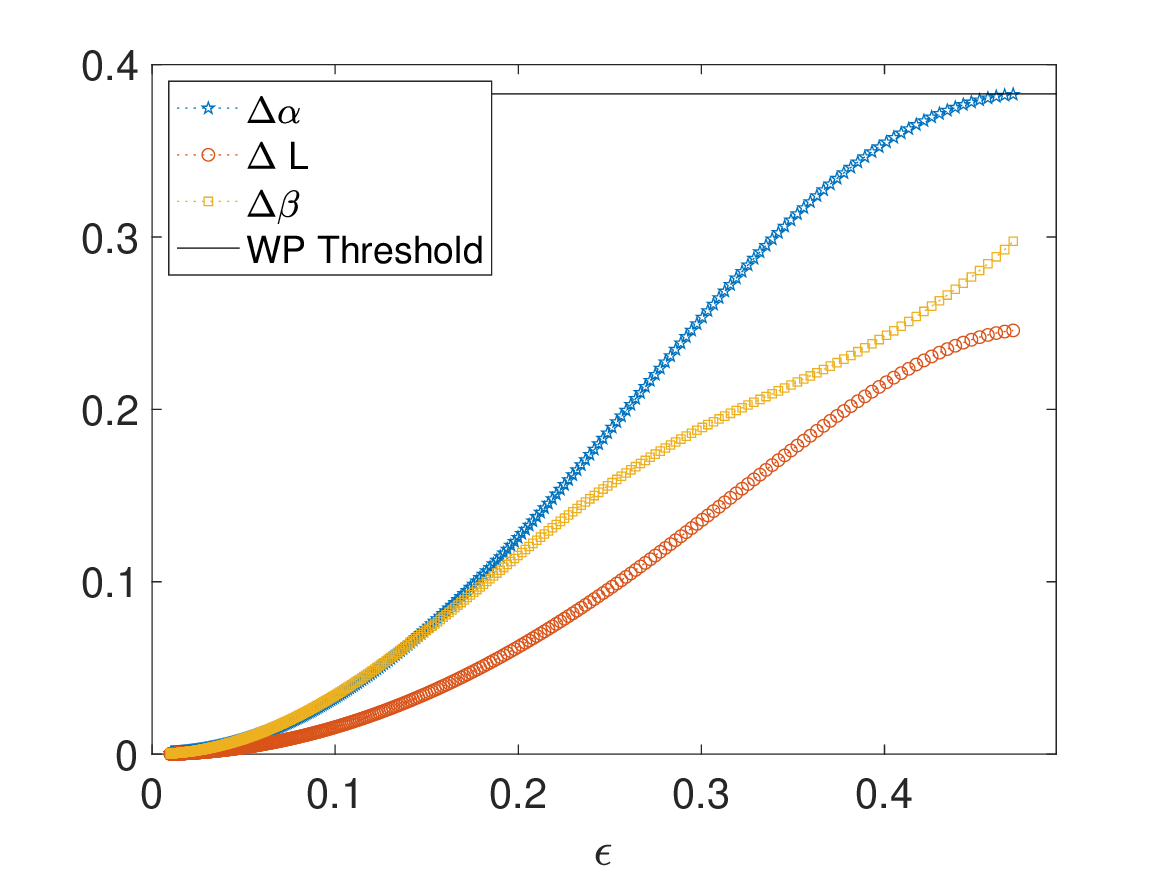}\includegraphics[width=0.3\textwidth]{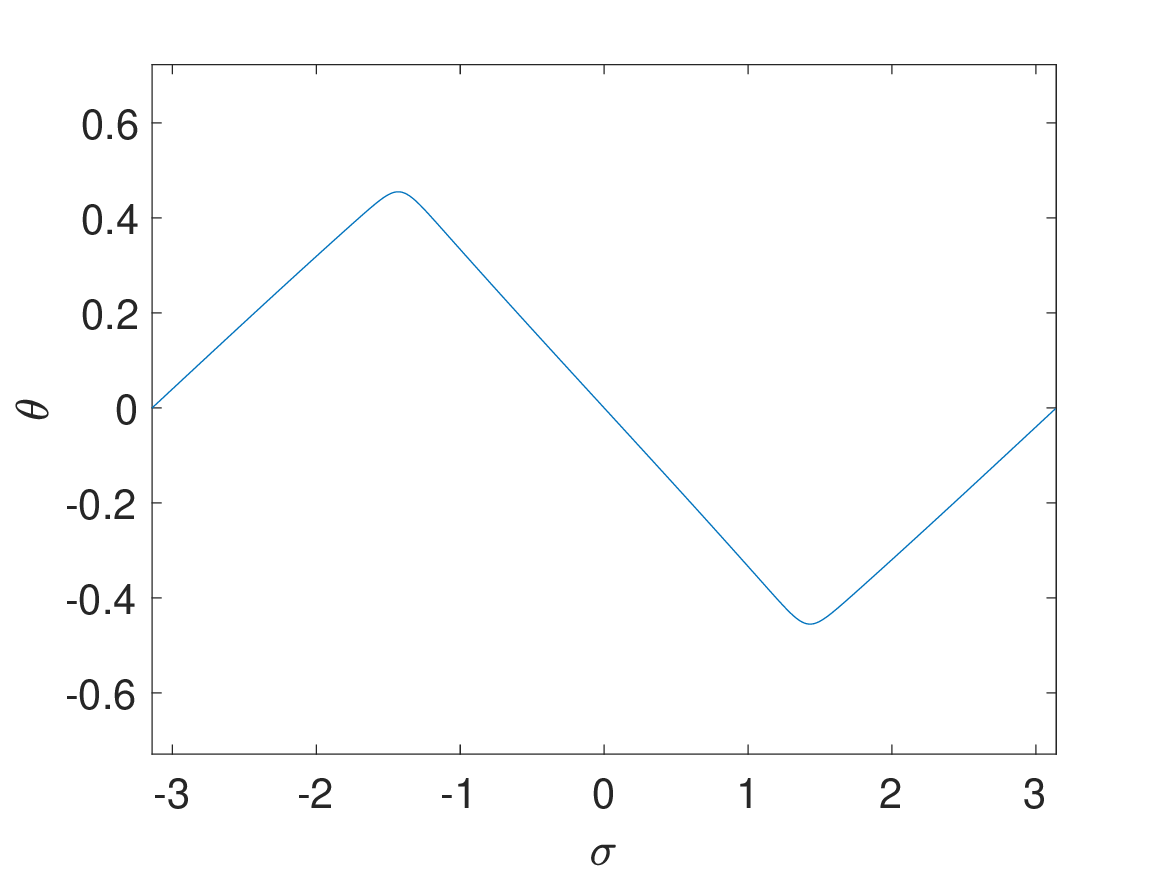}\includegraphics[width=0.3\textwidth]{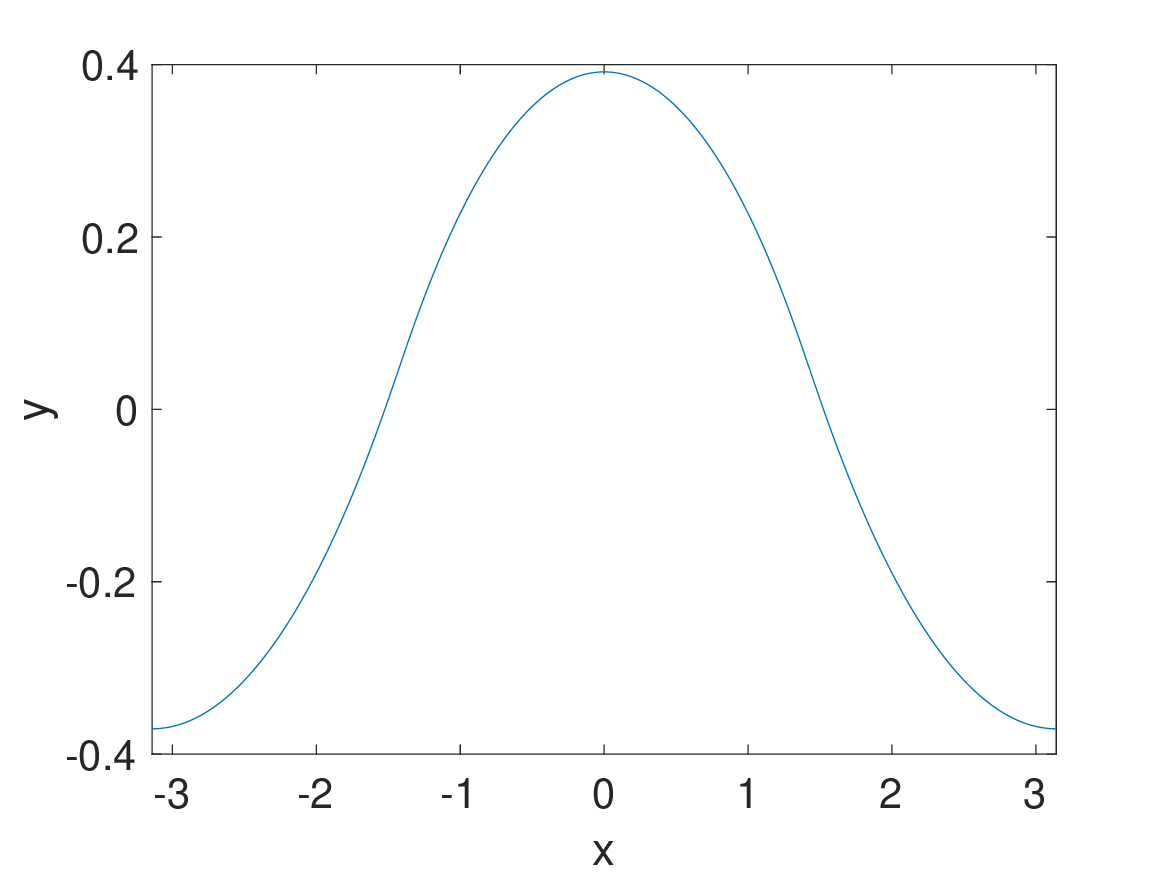}}
\caption{\it \textbf{Left:} The change in parameters $L,\alpha$ and $\beta$ over a computed branch of solutions to \eqref{StandingNonLinModel}, starting with $\alpha\approx-3.383, \beta=1,L=2\pi,n=1$. \textbf{Center:} The standing wave's tangent angle $\theta$ on the largest wave on this part of the branch.  \textbf{Right:} The standing wave's interface profile of the largest wave on this part of the branch. The branch seems to limit on a wave with a curvature singularity. \label{NonLinUBranch1}}
\end{figure}

\begin{figure}[tp]
\centerline{\includegraphics[width=0.3\textwidth]{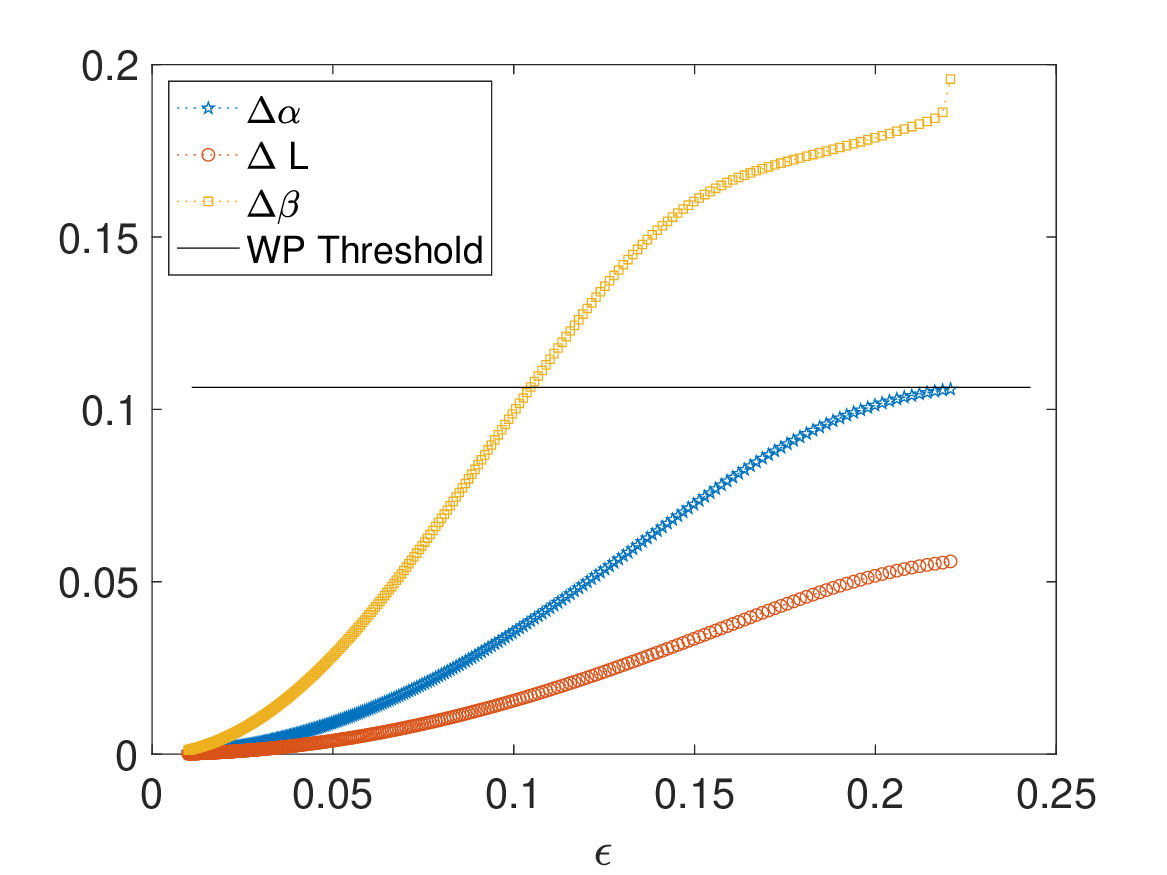}\includegraphics[width=0.3\textwidth]{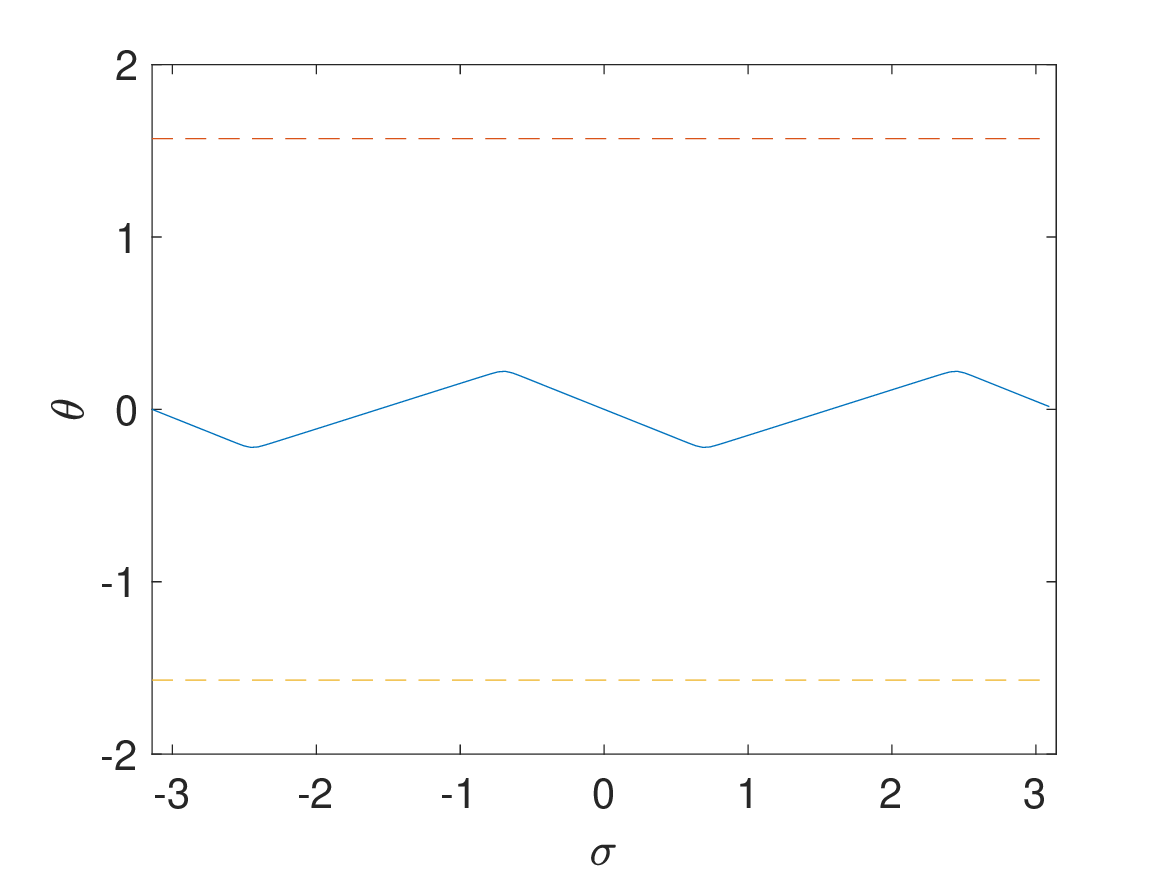}\includegraphics[width=0.3\textwidth]{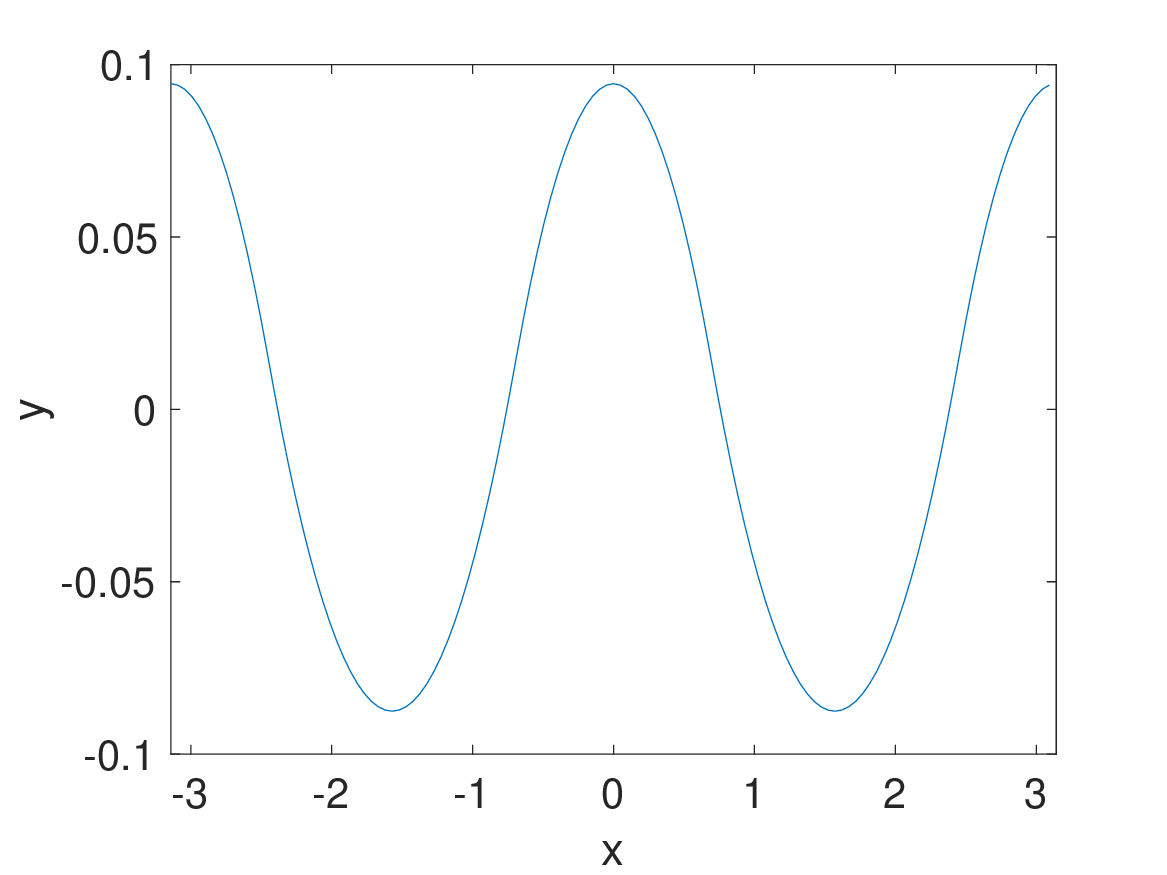}}
\caption{\it \textbf{Left:} The change in parameters $L,\alpha$ and $\beta$ over a computed branch of solutions to \eqref{StandingNonLinModel}, starting with $\alpha\approx-3.1064, \beta=1,L=2\pi, n=2$. \textbf{Center:} The standing wave's tangent angle $\theta$ on the largest wave on this part of the branch. \textbf{Right:} The standing wave's interface profile of the largest wave on this part of the branch. The branch seems to limit on a wave with a curvature singularity. \label{NonLinUBranch2}}
\end{figure}

\begin{figure}[htp]
\centerline{\includegraphics[width=0.3\textwidth]{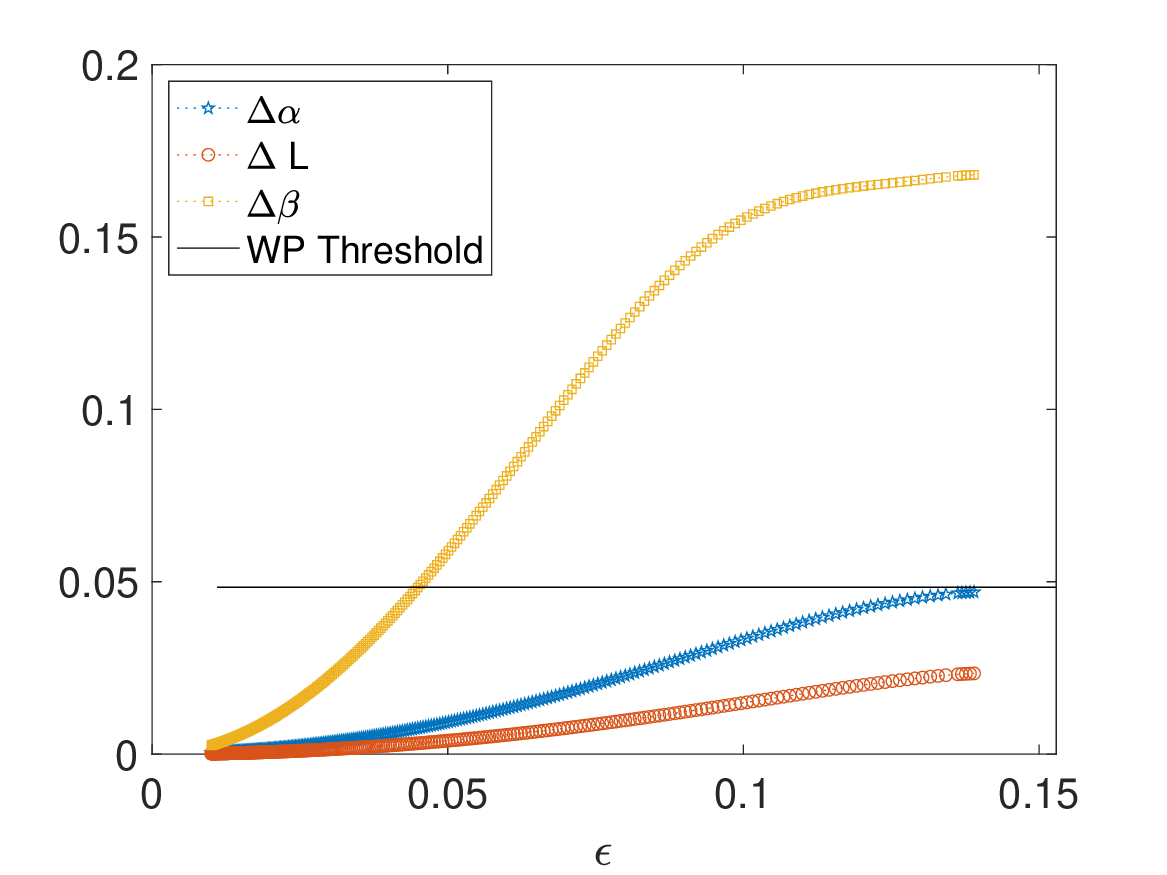}\includegraphics[width=0.3\textwidth]{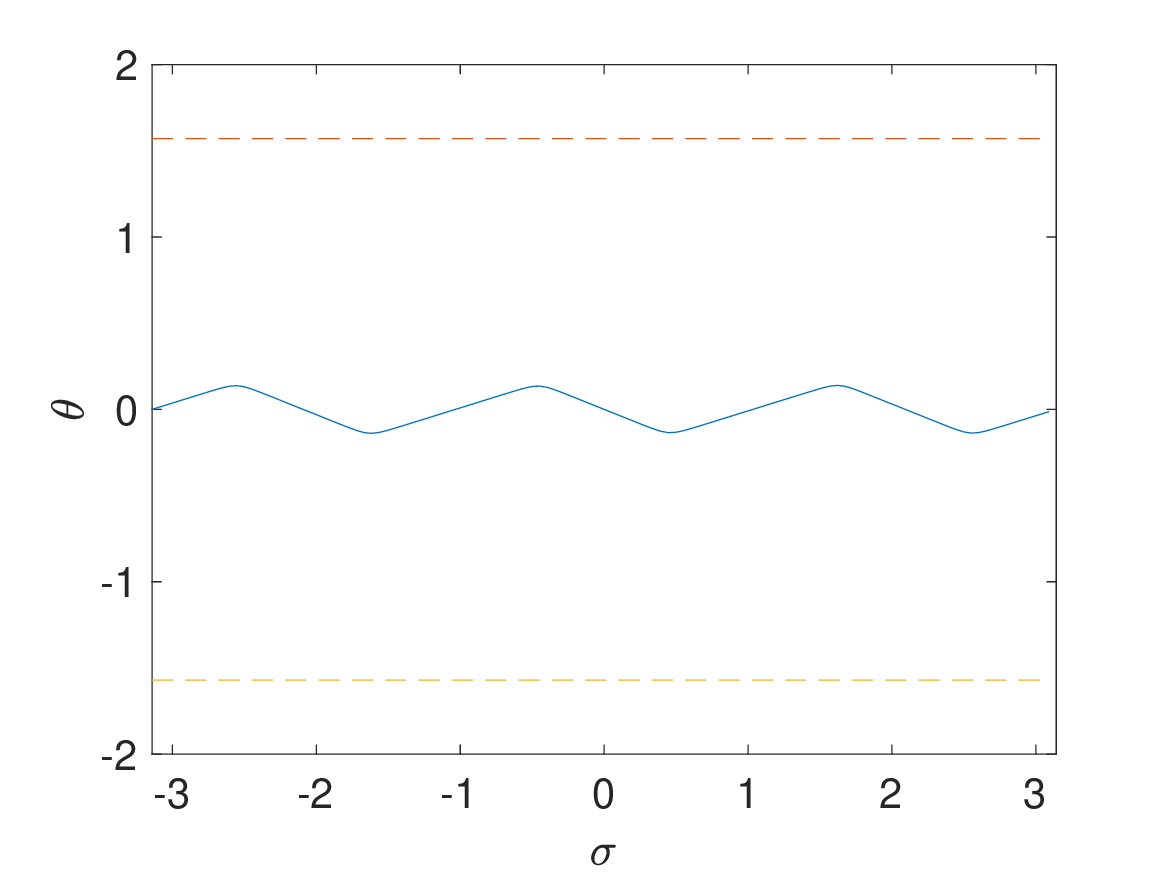}\includegraphics[width=0.3\textwidth]{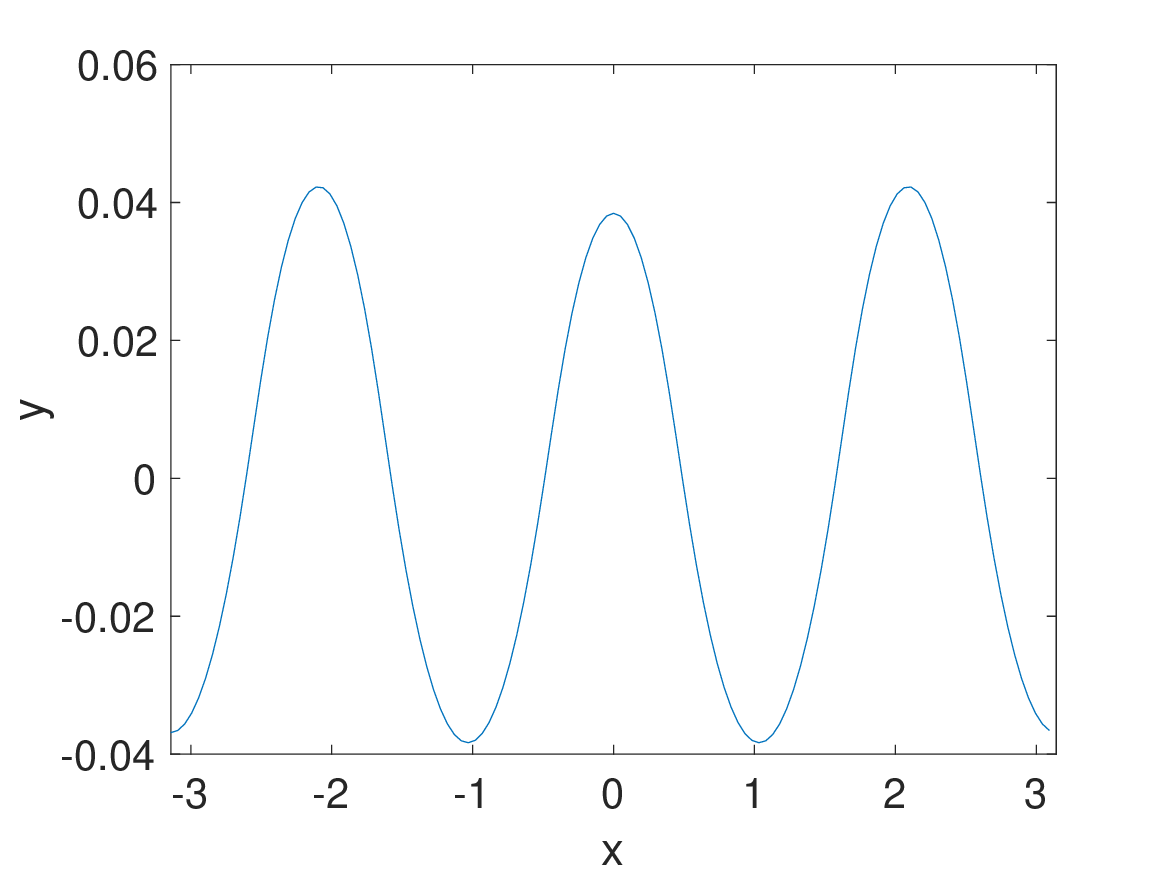}}
\caption{\it \textbf{Left:} The change in parameters $L,\alpha$ and $\beta$ over a computed branch of solutions to \eqref{StandingNonLinModel}, starting with $\alpha\approx-3.0484, \beta=1,L=2\pi$,n=3. \textbf{Center:} The standing wave's tangent angle $\theta$ on the largest wave on this part of the branch. The dotted lines mark $|\theta|=\pi/2$. \textbf{Right:} The standing wave's interface profile of the largest wave on this part of the branch. The branch seems to limit on a wave with a curvature singularity.\label{NonLinUBranch3} }
\end{figure}
For model \eqref{NonLinModel}, the forward propagating solutions $(\beta\ne 0)$ can be written
\begin{multline}\label{StandingNonLinModel}
  \beta\cos(\theta)=
  \\
  1+(\alpha-1)\frac{2\pi}{L}\theta_{\sigma}+\alpha^2(\alpha+3)\frac{(2\pi)^3}{L^3}\theta_{\sigma\sigma\sigma}+\left(1+\frac{\alpha}{2}\right)\kappa^2+\left( 2\alpha+5\alpha^2-\frac{\alpha^3}{3} \right)\kappa^3.
  \end{multline}
 Expanding this equation using the ansatz in \eqref{PertExp} gives a leading order problem
 \[(\alpha_0-1)\theta_{1,\sigma}+\alpha_0^2(\alpha_0+3)\theta_{1,\sigma\sigma\sigma}=0, \]
 which has periodic solutions
 \[ \theta_1=A\cos\left(\frac{\sqrt{\alpha_0-1}}{|\alpha_0|\sqrt{\alpha_0+3}}\sigma\right)+B\sin\left(\frac{\sqrt{\alpha_0-1}}{|\alpha_0|\sqrt{\alpha_0+3}}\sigma\right), \]
 with $\beta_0=1$.  Brief inspection of \eqref{StandingNonLinModel} reveals that it could support odd $\theta$, but cannot have even $\theta$; we set $A=0$ and absorb $B$ into the definition of $\epsilon$.  The linearization tells us that periodic waves for $\theta\in(0,2\pi)$ may exist only when 
 \[ \alpha_0^2(\alpha_0+3)k_{0}^2-(\alpha_0-1)=0, \qquad k_{0}\in\mathbb{N}.\]
 For $k_{0}=1$ the single real root of this cubic is $\alpha_0\approx -3.383$.

Branches of traveling waves were also computed in the fully nonlinear coordinate-free model \eqref{StandingNonLinModel}.  These results are depicted in Figures \ref{NonLinUBranch1}, \ref{NonLinUBranch2}, and \ref{NonLinUBranch3}.   The left panels depict the changes in the speed $\Delta\beta=\beta-1$, changes in the parameter $\Delta\alpha=\alpha-\alpha_0$ and changes in the length of the curve $\Delta L=L-2\pi$. The value of $\alpha_0$ from which these waves bifurcate is in the regime for which the time dynamic model (as discussed in the introduction) is linearly ill-posed.  A horizontal line marks the value of $\alpha$ for which the model becomes well posed.  We observe that the branches of waves do not cross this threshold.  As $\alpha$ approaches the well-posedness threshold, the wave profile becomes singular.  The tangent angle develops a discontinuous derivative, corresponding to a singularity in curvature.   The center panel depicts the tangent angle profile, $\theta$, of the extreme wave on the branch.  The right panel depicts the interface corresponding to the extreme wave.

\section*{Acknowledgments}
The second author acknowledges support from the Air Force Office of Scientific Research and the Joint Directed Energy Transition Office.  
The third author is grateful to the National Science Foundation for support through grant DMS-2307638.

{\bf Disclaimer:} This report was prepared
as an account of work sponsored by an agency of the United States
Government. Neither the United States Government nor any agency
thereof, nor any of their employees, make any warranty, express or
implied, or assumes any legal liability or responsibility for the
accuracy, completeness, or usefulness of any information,
apparatus, product, or process disclosed, or represents that its
use would not infringe privately owned rights. Reference herein to
any specific commercial product, process, or service by trade name,
trademark, manufacturer, or otherwise does not necessarily
constitute or imply its endorsement, recommendation, or favoring
by the United States Government or any agency thereof. The views
and opinions of authors expressed herein do not necessarily state
or reflect those of the United States Government or any agency
thereof.

\bibliographystyle{plain}
\bibliography{WiltonSheets}{}

\end{document}